\numberwithin{equation}{section}
\newcommand{\e}{\varepsilon}
\newcommand{\R}{\mathbb{R}}
\newtheorem{de}{Definition}
\newtheorem{lem}{Lemma}
\newtheorem{prop}{Proposition}
\newtheorem{thm}{Theorem}
\newtheorem{rem}{Remark}
\newcommand{\lap}{\Delta}
\newcommand{\pv}{\operatorname{PV}}
\renewcommand{\div}{\operatorname{div}}
\newcommand{\osc}{\operatorname{osc}}
\title{H\"older estimates for viscosity solutions of equations of fractional $p$-Laplace type}
\author{Erik Lindgren\footnote{Department of Mathematics, KTH. Supported by the Swedish Research Council, grant no. 2012-3124. Partially supported by the Royal Swedish Academy of Sciences. A substantial part of this work was carried out at the Isaac Newton's Institute, during the program ``Free boundary problems'' and I am thankful for the great hospitality.}}
\begin{document}
\maketitle
\begin{abstract}\noindent We prove H\"older estimates for viscosity solutions of a class of possibly degenerate and singular equations modelled by the fractional $p$-Laplace equation
$$
\pv \int_{\R^n}\frac{|u(x)-u(x+y)|^{p-2}(u(x)-u(x+y))}{|y|^{n+sp}}\, dy =0,
$$
where $s\in (0,1)$ and $p>2$ or $1/(1-s)<p<2$. Our results also apply for inhomogeneous equations with more general kernels, when $p$ and $s$ are allowed to vary with $x$, without any regularity assumption on $p$ and $s$. This complements and extends some of the recently obtained H\"older estimates for weak solutions.
\end{abstract}


\section{Introduction} 
We study the local H\"older regularity for viscosity solutions of possibly degenerate and singular non-local equations of the form
\begin{equation*}
\pv \int_{\R^n}|u(x)-u(x+y)|^{p-2}(u(x)-u(x+y))K(x,y)\, dy = f(x),
\end{equation*}
where $f$ is bounded and $K(x,y)$ essentially behaves like $|y|^{-n-sp}$. Here $\pv$ stands for the \emph{principal value}.

This type of equations is one possible non-local counterpart of equations of $p$-Laplace type and arises for instance as the Euler-Lagrange equation of functionals in fractional Sobolev spaces. Solutions can also be constructed directly via Perron's method (cf. \cite{IN10}). In the case $K(y)=|y|^{-n-sp}$, when properly rescaled, solutions converge to solutions of the $p$-Laplace equation
$$
\lap_p u=\div (|\nabla u|^{p-2}\nabla u)=0
$$ as the parameter $s$ tends to 1, see \cite{IN10}.

Our first and main result is that bounded viscosity solutions (see Section \ref{sec:visc}) of the homogeneous equation are locally H\"older continuous, see the theorem below. Throughout the paper we denote by $B_r$, the ball of radius $r$ centered at the origin.

\begin{thm}\label{thm:main} Assume $K$ satisfies $K(x,y)=K(x,-y)$ and there exist $\Lambda\geq \lambda>0$, $M>0$ and $\gamma>0$ such that
\begin{align*}
\frac{\lambda}{|y|^{n+sp}}\leq &K(x,y)\leq \frac{\Lambda}{|y|^{n+sp}}, \text{ for } y\in B_2,x\in B_2, \\
0\leq &K(x,y) \leq \frac{M}{|y|^{n+\gamma}}, \text{ for } y\in \R^n\setminus B_\frac14, x\in B_2, 
\end{align*}
where $s\in (0,1)$ and $p\in (1,\infty)$. In the case $p<2$ we require additionally $p>1/(1-s)$. Let $u\in L^\infty(\R^n)$ be a viscosity solution of 
$$
Lu:=\pv \int_{\R^n}|u(x)-u(x+y)|^{p-2}(u(x)-u(x+y))K(x,y)\, dy =0\text{ in }B_2.
$$
Then $u$ is H\"older continuous in $B_1$ and in particular there exist $\alpha$ and $C$ depending on $\lambda,\Lambda,M,p,s$ and $\gamma$ such that 
$$
\|u\|_{C^\alpha(B_1)}\leq C\|u\|_{L^\infty(\R^n)}.
$$
\end{thm}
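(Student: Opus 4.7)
The standard approach is to prove geometric decay of oscillation on dyadic balls. I would show that there exist $\alpha\in(0,1)$ and $\rho\in(0,1/2)$, depending only on $n,s,p,\lambda,\Lambda,M,\gamma$, and sequences $\{m_k\le M_k\}_{k\ge 0}$ with $M_k-m_k=\|u\|_{L^\infty(\R^n)}\rho^{\alpha k}$ and $m_k\le u\le M_k$ on $B_{\rho^k}$. Since the structural hypotheses on $K$ are translation-uniform on $B_2$, the same argument applied at any point $x_0\in B_1$ yields the claimed $C^\alpha(B_1)$ bound. The base case is the global $L^\infty$ bound of $u$, and by dividing by the current oscillation and rescaling via
\[
v_k(x)=\frac{2}{M_k-m_k}\Bigl(u(\rho^k x)-\tfrac{m_k+M_k}{2}\Bigr),
\]
the inductive step reduces to a one-step oscillation decay for a viscosity solution $v_k$ on $B_2$ of a rescaled equation $L_kv_k=0$, with $|v_k|\le 1$ in $B_1$ and a mild growth bound $|v_k(x)|\le 2|x|^\alpha$ at infinity inherited from the previous steps. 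A change of variables, using $K(x,y)=K(x,-y)$, shows that $L_k$ satisfies the same structural assumptions with uniform constants, provided $\rho$ is small enough in terms of $s$ and $\gamma$.

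\textbf{One-step decay via a barrier.} The key lemma is: there exist $\rho\in(0,1/2)$ and $\eta>0$ such that any viscosity solution $v$ of $L_kv=0$ on $B_2$ with $|v|\le 1$ on $B_1$ and $|v(x)|\le 1+|x|^\alpha$ on $\R^n$ satisfies $\osc_{B_\rho}v\le 1-\eta$. By symmetry we may assume $|\{v\le 0\}\cap B_1|\ge |B_1|/2$ and prove $v\le 1-\eta$ on $B_\rho$; the opposite case follows by applying the argument to $-v$. The idea is to construct a smooth test function $\varphi$ with $\varphi\ge v$ on $\R^n\setminus B_\rho$, so designed that $L_k\varphi$ can be computed classically and is strictly negative in $B_\rho$. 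Lowering $\varphi$ vertically until it first touches $v$ from above at some point in $B_\rho$ then contradicts the viscosity subsolution inequality $L_kv\le 0$. The strict negativity of $L_k\varphi$ should come from the symmetric cancellations in the principal value, the interaction of $\varphi$ with the bulk set where $v\le 0$, and the structural bounds on $K_k$.

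\textbf{Main obstacle.} The delicate point is a quantitative evaluation of $L_k\varphi$ uniformly across the iteration, particularly in the singular range $p<2$. In that regime the factor $|v(x)-v(x+y)|^{p-2}$ can blow up as $y\to 0$, and for $v$ merely Hölder continuous the principal-value integral is tricky to handle. The hypothesis $p>1/(1-s)$ is precisely the threshold that makes $L$ well-defined on $C^\alpha$-functions with small enough $\alpha$, and that lets the polynomial tails $|v(x)|\lesssim|x|^\alpha$ pair integrably with the kernel $|y|^{-n-sp}$. Once the barrier computation is made quantitative, with an $\eta$ independent of the iteration step, the induction closes and yields the stated Hölder estimate with exponent $\alpha$ determined by $\rho^\alpha=1-\eta$.
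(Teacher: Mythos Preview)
Your iteration scheme (dyadic oscillation decay, rescaling, tail growth $|v_k|\lesssim |x|^\alpha$ inherited from previous steps) is correct and is exactly what the paper does, with $\rho=1/2$. The gap is in the mechanism of the one-step decay. You propose to build a smooth barrier $\varphi$ with $L_k\varphi<0$ in $B_\rho$ and $\varphi\ge v$ outside $B_\rho$, slide it down until it first touches $v$ from above, and contradict $L_kv\le 0$. This does not yield a contradiction: if $\varphi-c$ touches $v$ from above at $x_0$, the viscosity definition (or Proposition~\ref{prop:pw}) gives $L_kv(x_0)\le 0$, which is \emph{consistent} with $L_k\varphi<0$, not opposed to it. More importantly, $L_k\varphi$ for a fixed smooth $\varphi$ cannot ``interact with the bulk set where $v\le 0$'': the operator applied to $\varphi$ alone never sees $v$. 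The measure information about $\{v\le 0\}$ has to enter through $v$ itself in the nonlocal integral, and your barrier scheme provides no channel for that.

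The paper (following Silvestre) does not build a barrier. It adds a small $C^2$ bump $k\beta$ to $v$; if $v>1-\theta$ somewhere in $B_{1/2}$, then $v+k\beta$ attains its maximum over $B_1$ at some interior point $x\in B_{3/4}$. A translate of $-k\beta$ then touches $v$ from above at $x$, so Proposition~\ref{prop:pw} makes $L_kv(x)\le 0$ \emph{pointwise}. Now one estimates $L_k(v+k\beta)(x)$ in two ways. From below: at the maximum the integrand is nonnegative on $\{x+y\in B_1\}$, and on the subset where $v(x+y)\le 0$ it is at least $(1-k)^{p-1}K(x,y)$, producing a fixed positive quantity proportional to $|\{v\le 0\}\cap B_1|\ge\delta$; the tail is controlled by $v\le 2|2x|^\eta-1$. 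From above: the algebraic inequalities of Lemmas~\ref{lem:pineq1}--\ref{lem:pest} split $L_k(v+k\beta)(x)$ into a multiple of $L_kv(x)\le 0$ plus terms of size $O(k^{p-1})$ involving only $\beta$. Choosing $k$ and $\eta$ small (Proposition~\ref{prop:fixp}) forces the upper bound below the lower bound, a contradiction. The missing idea is precisely this: the nonlocality of $L_k$ is exploited on $v+k\beta$ itself, so that at the maximum the integral literally sees the far-away set $\{v\le 0\}$ and turns its measure into the quantitative gain $\theta$. The bump $\beta$ is not a barrier; its only roles are to force an interior maximum and to supply the $C^2$ touching function needed to evaluate $L_kv(x)$ classically.
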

In particular, Theorem \ref{thm:main} applies for the fractional $p$-Laplace equation
$$
\pv \int_{\R^n}\frac{|u(x)-u(x+y)|^{p-2}(u(x)-u(x+y))}{|y|^{n+sp}}\, d y=0.
$$
We are also able to prove H\"older estimates for inhomogeneous equations with variable exponents, see below:

\begin{thm}\label{thm:main2} Assume $K$ satisfies $K(x,y)=K(x,-y)$ and there exist $\Lambda\geq \lambda>0$, $M>0$ and $\gamma>0$ such that
\begin{align*}
\frac{\lambda}{|y|^{n+s(x)p(x)}}\leq &K(x,y)\leq \frac{\Lambda}{|y|^{n+s(x)p(x)}}, \text{ for } y\in B_2,x\in B_2,\\
0\leq &K(x,y) \leq \frac{M}{|y|^{n+\gamma}}, \text{ for } y\in \R^n\setminus B_\frac14, x\in B_2, 
\end{align*}
where $0<s_0<s(x)<s_1<1$ and $1<p_0<p(x)<p_1<\infty$. In the case $p(x)<2$ we require additionally  that there is $\tau>0$ such that
$$
p(x)(1-s(x))-1>\tau.$$
Let $f\in C(B_2)\cap L^\infty(B_2)$ and let $u\in L^\infty(\R^n)$ be a viscosity solution of
$$
Lu:=\pv \int_{\R^n}|u(x)-u(x+y)|^{p(x)-2}(u(x)-u(x+y))K(x,y)\, dy = f(x)\text{ in } B_2.$$
Then $u$ is H\"older continuous in $B_1$ and in particular there exist $\alpha$ and $C$ depending on $\lambda,\Lambda,M,p_0,p_1,s_0,s_1,\gamma $ and $\tau$ such that 
$$
\|u\|_{C^\alpha(B_1)}\leq C\left(\|u\|_{L^\infty(\R^n)}+\max\left(\|f\|_{L^\infty(B_2)}^\frac{1}{p_0-1},\|f\|_{L^\infty(B_2)}^\frac{1}{p_1-1}\right)\right).
$$
\end{thm}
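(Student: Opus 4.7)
The plan is to reduce Theorem \ref{thm:main2} to the oscillation decay argument that underlies Theorem \ref{thm:main}, absorbing the right-hand side $f$ by scaling and treating the variable exponents through their uniform bounds. First I would normalize. Setting
$$
K_0 = \|u\|_{L^\infty(\R^n)} + \max\bigl(\|f\|_{L^\infty(B_2)}^{1/(p_0-1)},\, \|f\|_{L^\infty(B_2)}^{1/(p_1-1)}\bigr) + 1
$$
and $v(x) = u(rx)/K_0$ for a small $r \in (0,1]$ to be chosen, a direct change of variables shows that $v$ solves an equation of the same structural form with kernel $\tilde K(x,y) = r^{n+\tilde s(x)\tilde p(x)} K(rx,ry)$, exponents $\tilde s(x) = s(rx)$, $\tilde p(x) = p(rx)$ still obeying the uniform bounds, and right-hand side
$$
\tilde f(x) = \frac{r^{\tilde s(x)\tilde p(x)}}{K_0^{\tilde p(x)-1}}\,f(rx).
$$
Since $r \leq 1$, $K_0 \geq 1$ and $p_0 \leq \tilde p(x) \leq p_1$, choosing $r$ small makes $\|\tilde f\|_{L^\infty} \leq \e_0$ for any prescribed $\e_0$. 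It is enough to prove an a priori estimate $\|v\|_{C^\alpha(B_1)}\leq C$ for any viscosity solution with $\|v\|_{L^\infty(\R^n)}\leq 1$ and $|\tilde f|\leq \e_0$.

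The heart of the proof is an oscillation decay lemma: there exist $\rho_0 \in (0,1/2)$, $\theta_0 \in (0,1)$, $\e_0 > 0$, depending only on $\lambda,\Lambda,M,p_0,p_1,s_0,s_1,\gamma,\tau$, such that every viscosity solution $v$ with $\|v\|_{L^\infty(\R^n)} \leq 1$ and $|Lv|\leq \e_0$ in $B_2$ satisfies $\osc_{B_{\rho_0}} v \leq \theta_0$. This is essentially the same statement as the one used in the proof of Theorem \ref{thm:main}: the nonlinearity at a point $x_0$ sees only the frozen values $\tilde p(x_0), \tilde s(x_0)$ and the pointwise kernel bounds at $x_0$, all of which lie in the class covered by Theorem \ref{thm:main} with parameters depending only on the uniform bounds. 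I would prove it by the same barrier construction: after splitting into the cases $|\{v\leq 0\}\cap B_{1/2}|$ large or small, one uses a translated power-type test function $\phi(x)= c(|x-x_*|^{\alpha_0}-r_*^{\alpha_0})_+$ and the viscosity inequality to rule out an interior contact from above in $B_{\rho_0}$; in the singular regime $\tilde p(x) < 2$, the hypothesis $\tilde p(x)(1-\tilde s(x))-1 \geq \tau$ ensures that the linearized integrand near the contact point is uniformly integrable.

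To iterate, define inductively $v_0 = v$ and $v_{k+1}(x) = (v_k(\rho_0 x) - m_k)/\theta_0$, with $m_k$ chosen so $\osc_{B_1} v_{k+1} \leq 1$. Each $v_k$ solves an equation of the same class with kernel and exponents translating/rescaling to an equation again covered by the oscillation lemma, and with right-hand side still bounded by $\e_0$ (using $p_0>1$, $\rho_0\leq 1/2$ and $\theta_0<1$). The only nontrivial point is to handle the tail: one subtracts an affine-in-value correction at each step and uses the $M,\gamma$ bound on $K$ outside $B_{1/4}$ to absorb the growth of $v_k$ outside $B_1$ into $\e_0$. The oscillation lemma then gives $\osc_{B_{\rho_0^k}} v \leq C\theta_0^k$, hence Hölder continuity at the origin with exponent $\alpha = \log\theta_0/\log\rho_0$; translation invariance of the hypotheses over $B_1$ upgrades this to a $C^\alpha$ estimate, and undoing the normalization produces the $K_0$ factor in the final estimate.

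The main obstacle I anticipate is the iteration step: because $p(x)$ and $s(x)$ are only assumed measurable, the scaling factor $r^{\tilde s(x)\tilde p(x)}/K_0^{\tilde p(x)-1}$ is $x$-dependent, and one must check that the rescaled equations at every scale remain in a class with uniformly controlled parameters, so that the \emph{same} $\rho_0,\theta_0,\e_0$ from the oscillation lemma apply at every step. The uniform bounds $p_0\leq p(x)\leq p_1$, $s_0\leq s(x)\leq s_1$, and (in the singular regime) $p(x)(1-s(x))-1\geq \tau$ are precisely what makes this verification uniform in $k$.
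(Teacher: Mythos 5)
The overall architecture of your plan matches the paper's: normalize so $\|u\|_\infty\leq 1$ and the right-hand side is small, prove a one-step oscillation decay lemma for the variable-exponent operator, and iterate on dyadic scales. The observation that the nonlinearity and the kernel only see the frozen values $p(x_0), s(x_0)$ at the contact point, so that the pointwise viscosity argument carries over verbatim once uniform bounds on $p,s$ are imposed, is exactly the mechanism the paper uses (its Proposition~\ref{prop:varp} and Lemma~\ref{lem:key2} differ from their constant-exponent counterparts only in notation, plus an extra admissible error $\e$ in the inequality $Lu\leq\e$).

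There are, however, two concrete gaps. First, the decay lemma as you state it (hypotheses $\|v\|_{L^\infty(\R^n)}\leq 1$ and $|Lv|\leq\e_0$) cannot be iterated: already after one rescaling step $v_{k+1}(x)=(v_k(\rho_0 x)-m_k)/\theta_0$ the rescaled function is no longer globally bounded by $1$; it can grow like $|y|^\alpha$ outside $B_1$, and for a nonlocal operator this tail contributes to $Lv_{k+1}(x_0)$. The paper handles this by building the tail growth directly into the hypotheses of the key lemma (namely $u(y)\leq 2|2y|^\eta-1$ for $|y|\geq 1$, together with $\alpha\leq\eta$ and $\eta$ small enough that $(p_1-1)\eta<\gamma$ keeps the tail integral finite, see \eqref{eq:case1est2}). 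Your proposed ``affine-in-value correction'' is unlikely to work as stated: the operator is $p$-homogeneous, not linear, so subtracting an affine function in the value of $v$ does not simply move a term to the right-hand side; what is needed is a controlled-polynomial-growth hypothesis, not an affine subtraction. Second, you say the iteration closes because ``$p_0>1$, $\rho_0\leq 1/2$ and $\theta_0<1$,'' but this is not the right constraint. When you scale by $\rho_0^k$ and divide by $\theta_0^k$, the rescaled right-hand side picks up a factor $\rho_0^{\,k\, s(x)p(x)}\theta_0^{-k(p(x)-1)}$; to keep this bounded uniformly in $k$ and in $x$ requires $\theta_0^{-(p(x)-1)}\leq\rho_0^{-s(x)p(x)}$, which in the worst case across the range of exponents reads, in the paper's normalization $\rho_0=1/2$, $\theta_0=2^{-\alpha}$, as
\begin{equation*}
\alpha\leq\frac{s_0 p_0}{p_1-1}.
\end{equation*}
This is the one constraint in the theorem that is genuinely specific to variable exponents, and your plan does not identify it; without it the smallness $|\tilde f_k|\leq\e_0$ is not preserved under the iteration.
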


\begin{rem} It might seem odd that the two conditions on $K$ in our main theorems are supposed to be satisfied in overlapping regions, $B_2$ and $B_\frac14$. This is only for notational convenience. It would be sufficient to have the first condition satisfied in $B_\rho$ for some $\rho>0$ and the second one satisfied outside $B_R$ for some large $R$ as long as we ask $K$ to be bounded in $B_R\setminus B_\rho$. 
\end{rem}

\subsection{Known results}
Equations similar to the ones in Theorem \ref{thm:main} were, to the author's knowledge, introduced in \cite{IN10}, where existence and uniqueness is established. It is also shown that the solutions converge to solutions of the $p$-Laplace equation, as $s\to 1$. Similar equations were also studied in \cite{CLM12}, where the focus lies in the asymptotic behaviour as $p\to \infty$. Related equations have also been suggested to be used in image processing and machine learning, see \cite{EDL12} and \cite{EDLL14}.

Recently, in \cite{CKP13} and \cite{CKP14}, H\"older estimates and a Harnack inequality were obtained for weak solutions of a very general class of equations of this type. The difference between these results and the ones in the present paper can be seen as the difference between equations in divergence form and those in non-divergence form in the non-local setting. In other words, their results are more in the flavour of Di Giorgi-Nash-Moser (cf. \cite{DG57}, \cite{Nas58} and \cite{Mos61}) while the results in this paper are more in the flavour of Krylov-Safonov (cf. \cite{KS79}).

In the case $p=2$, corresponding to equations of the form
\begin{equation}\label{eq:fraclap}
\pv \int_{\R^n}  \frac{u(x)-u(x+y)}{|y|^{n+2s}}\, dy =f(x),
\end{equation}
a similar development has already taken place. In \cite{Sil06}, a surprisingly simple proof of H\"older estimates for viscosity solutions were given for a very general class of equations corresponding to equations of non-divergence form. An adaptation of the method used therein is used in the present paper. In \cite{Kas09}, H\"older estimates were obtained for weak solutions for a class of equations corresponding to equations of divergence form, including equations of the form \eqref{eq:fraclap}.

Related is also \cite{BCF12a} and \cite{BCF12b}, where another type of degenerate (or singular) non-local equation is studied. H\"older estimates and some higher regularity theory are established. It is also proved that these equations approach the $p$-Laplace equation in the local limit.
\subsection{Comments on the equation}
Let us very briefly point out the difference between the class of equations considered in \cite{CKP13} and \cite{CKP14}, and the class of equations considered here (see also \cite{Sil06} for a similar discussion). There, weak solutions are considered, in the sense that
\begin{equation}\label{eq:weak}
\int_{\R^n}\int_{\R^n}|u(x)-u(y)|^{p-2}(u(x)-u(y))(\phi(x)-\phi(y))G(x,y)\,dx dy=0
\end{equation}
for any $\phi\in C_0^\infty(B_2)$, where $G(x,y)$ behaves like $|x-y|^{-n-sp}$. These solutions arise for instance as minimizers of functionals of the form
$$
\int_{\R^n}\int_{\R^n}|u(x)-u(y)|^{p}G(x,y)\,dx dy.
$$
In the most favorable of situations, we are allowed to change the order of integration and write \eqref{eq:weak} as
$$
 \int_{\R^n}\int_{\R^n}|u(x)-u(y)|^{p-2}(u(x)-u(y))(G(x,y)+G(y,x))\phi(x)\,dx dy=0,
$$
and conclude
$$
\pv \int_{\R^n}|u(x)-u(y)|^{p-2}(u(x)-u(y))(G(x,y)+G(y,x))\,dy=0.
$$
The change of variables $y=z+x$ yields
$$
\pv \int_{\R^n}|u(x)-u(z+x)|^{p-2}(u(x)-u(z+x))(G(x,z+x)+G(z+x,x))\,dz=0,
$$
or
$$
\pv \int_{\R^n}|u(x)-u(z+x)|^{p-2}(u(x)-u(z+x))K(x,z)\,dz=0,
$$
where $K(x,z)=G(x,z+x)+G(z+x,x)$. Then necessarily $K(x,z-x)=K(z,x-z)$. Moreover, we are not always allowed to perform the transformations above. Hence, the two types of equations overlap but neither is contained in the other. In other words, the results in \cite{CKP13} and \cite{CKP14} do not always apply to the equations considered in this paper, and vice versa, the results in this paper do not always apply to the equations studied therein.

Another important remark is that the estimates obtained in this paper are not uniform as $s\to 1$, i.e., in the limit in which the equation becomes local. This is also the case in \cite{Sil06}. For fully nonlinear equations of fractional Laplace type, uniform estimates as $s\to 1$ have been obtained (see for instance \cite{CS09}), but they are more involved, and they follow the same strategy as the estimates for fully nonlinear (local) equations.

In our case, if $\phi \in C^2_0$ and $p>2$, then 
$$
(1-s)\pv \int_{\R^n}\frac{|\phi(x)-\phi(x+y)|^{p-2}(\phi(x)-\phi(x+y))}{|y|^{n+sp}}\,d y \to -C_{p,n}\lap_p \phi,
$$
as $s\to 1$. If we instead have a kernel of the form 
$$G\left(\frac{y}{|y|}\right)\frac{1}{|y|^{n+sp}},$$
then
\begin{align*}
&(1-s)\pv \int_{\R^n}\frac{|\phi(x)-\phi(x+y)|^{p-2}(\phi(x)-\phi(x+y))G(\frac{y}{|y|})}{|y|^{n+sp}}\,d y \\
&\to -C_{p,n}|\nabla \phi|^{p-2}a_{ij}(\nabla \phi)D_{ij}^2 \phi, 
\end{align*}
as $s\to 1$, where the matrix $(a_{ij})(\nabla \phi)$ is positive definite and can be given explicitly as integrals over the sphere in terms of $G$. This type of degenerate (or singular) equations of non-divergence form, remained fairly unstudied until quite recently. Starting with \cite{BID04}, these equations have attracted an increasing amount of attention. See also \cite{Imb11} and \cite{IS13} where $C^\alpha$ and $C^{1,\alpha}$-estimates are established, respectively.

\section{Viscosity solutions}\label{sec:visc}
In this section, we introduce the notion of viscosity solutions (as in \cite{CS09}) and prove that viscosity solutions can be treated almost as classical solutions.

\begin{de} Let $D$ be an open set and let $L$ be as defined in Theorem \ref{thm:main} or Theorem \ref{thm:main2}. A function $u\in L^\infty(\R^n)$ which is upper semicontinuous in ${D}$ is a subsolution of 
$$  
L u\,\leq C \text{ in $D$}
$$
if the following holds: whenever $x_0\in D$ and $\phi\in C^2({B_r(x_0)})$ for some $r>0$ are such that
$$
\phi(x_0)=u(x_0), \quad \phi(x)\geq u(x) \text{ for $x\in B_r(x_0)\subset D$}
$$
then we have
$$
L\phi_r\, (x_0)\leq 0,
$$
where
$$
\phi_r =\left\{\begin{array}{lr}\phi \text{ in }B_r(x_0),\\
u\text{ in }\R^n\setminus B_r(x_0).
\end{array}\right. 
$$
A supersolution is defined similarly and a solution is a function which is both a sub- and a supersolution.
\end{de}
The following result verifies that whenever we can touch a subsolution from above with a $C^2$ function, we can treat the subsolution as classical subsolution. The proof is almost identical to the one of Theorem 2.2 in \cite{CS09}.
\begin{prop}\label{prop:pw} Assume the hypotheses of Theorem \ref{thm:main} or Theorem \ref{thm:main2}. Suppose $Lu\leq C$ in $B_1$ in the viscosity sense and that $x_0\in B_1$ and $\phi\in C^2({B}_r(x_0))$ is such that
$$
\phi(x_0)=u(x_0),\quad \phi(x)\geq u(x)\text{ in }B_r(x_0)\subset B_1, 
$$
for some $r>0$. Then $Lu$ is defined pointwise at $x_0$ and $Lu\, (x_0)\leq C$.  
\end{prop}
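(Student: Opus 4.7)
The strategy follows Theorem 2.2 in \cite{CS09}, with extra care needed for the nonlinear integrand. Write $F(t) := |t|^{p-2}t$ and $\delta_w(y) := w(x_0) - w(x_0 + y)$. For each $r' < r$, the function $\phi$ remains an admissible test function on $B_{r'}(x_0)$, so the viscosity condition yields $L\phi_{r'}(x_0) \leq C$. Using $\phi(x_0) = u(x_0)$, I decompose
$$L\phi_{r'}(x_0) = A(r') + B(r'),$$
where $A(r') := \pv \int_{|y| < r'} F(\delta_\phi(y)) K(x_0, y)\,dy$ is the inner principal value with the smooth test and $B(r') := \int_{|y| \geq r'} F(\delta_u(y)) K(x_0, y)\,dy$ is the outer absolutely convergent part against $u$.

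The first step is to check $L\phi_{r'}(x_0)$ is well-defined and $A(r') \to 0$ as $r' \to 0$. Using the symmetry $K(x_0, y) = K(x_0, -y)$ and pairing opposite $y$, a second-order Taylor expansion of $\phi \in C^2$ gives $|\delta_\phi(y) + \delta_\phi(-y)| \lesssim |y|^2$. For $p \geq 2$, the $C^1$ bound on $F$ yields $|F(\delta_\phi(y)) + F(\delta_\phi(-y))| \lesssim |y|^p$, which against $|y|^{-n-sp}$ produces the density $|y|^{p(1-s)-n}$, integrable near $0$. For $1 < p < 2$, only the $(p-1)$-H\"older estimate on $F$ is available, giving $|y|^{2(p-1)}$; integrability then requires $p(2-s) > 2$, which is exactly implied by the hypothesis $p(1-s) > 1$. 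The tail $|y| > 1/4$ is trivially controlled by $K \leq M|y|^{-n-\gamma}$.

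The crucial step is a monotonicity identity. For $0 < r_1 < r_2 < r$, subtracting and symmetrizing gives
$$L\phi_{r_1}(x_0) - L\phi_{r_2}(x_0) = \tfrac12 \int_{r_1 \leq |y| < r_2} \bigl[ F(\delta_u(y)) + F(\delta_u(-y)) - F(\delta_\phi(y)) - F(\delta_\phi(-y)) \bigr] K(x_0, y)\,dy.$$
Because $\phi \geq u$ on $B_r(x_0)$ with equality at $x_0$, one has $\delta_u(\pm y) \geq \delta_\phi(\pm y)$ there, and $F$ is increasing, so the integrand is pointwise nonnegative. Hence $r' \mapsto L\phi_{r'}(x_0)$ is nonincreasing in $r'$ and bounded above by $C$, and its limit as $r' \to 0$ exists in $[L\phi_{r_0}(x_0), C]$ for any fixed $r_0 \in (0, r)$. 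Combined with $A(r') \to 0$, the limit $\lim_{r' \to 0} B(r')$ exists; by the principal value definition it equals $Lu(x_0)$, and $Lu(x_0) \leq C$ follows by passing to the limit.

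The main obstacle is the integrability estimate in the singular regime $1 < p < 2$: the non-Lipschitz behavior of $F$ near the origin forces one to exploit both the kernel symmetry and the second-order cancellation $\delta_\phi(y) + \delta_\phi(-y) = O(|y|^2)$, and even then closing the estimate requires precisely the hypothesis $p(1-s) > 1$.
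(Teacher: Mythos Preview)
Your argument is correct and follows the same blueprint as the paper (which in turn adapts \cite{CS09}): both proofs exploit the pointwise monotonicity coming from $\phi\geq u$ and $F$ increasing to pass from the test functions $\phi_{r'}$ to $u$. The organization differs slightly: the paper writes the symmetrized quantity $\delta(\phi_s,x_0,y)$, splits it into $\delta^\pm$, bounds $\delta^-(u)$ by $|\delta(\phi_r)|$, and applies monotone/bounded convergence to $\delta^+$ and $\delta^-$ separately, thereby obtaining Lebesgue integrability of $\delta(u,x_0,\cdot)K(x_0,\cdot)$. You instead observe directly that $r'\mapsto L\phi_{r'}(x_0)=A(r')+B(r')$ is monotone and bounded, show $A(r')\to 0$ via the same second-order cancellation (with the integrability check $p(2-s)>2$, which is indeed implied by $p(1-s)>1$), and conclude that $B(r')$ converges to the principal value $Lu(x_0)\leq C$. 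Your route is a bit more streamlined; the paper's route yields the marginally stronger conclusion that the symmetrized integrand is genuinely in $L^1$ rather than merely a convergent principal value, but that extra strength is not used elsewhere in the paper.
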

\begin{proof} Since the result is only concerned with the behavior at one fixed point $x_0$, we see that there is no difference between assuming the hypotheses of Theorem \ref{thm:main} or Theorem \ref{thm:main2}. Hence, we give the proof under the hypotheses of Theorem \ref{thm:main}. For $0<s\leq r$, let
 $$
 \phi_s=\left\{\begin{array}{lr} \phi \text{ in }B_s(x_0),\\
u\text{ in }\R^n\setminus B_s(x_0).
 \end{array}\right.
 $$
 Since $u$ is a viscosity subsolution, $L \phi_s\,(x_0)\leq C$. Now introduce the notation 
 \begin{align*}
\delta(\phi_s, x,y)=&\frac{1}{2}|\phi_s(x)-\phi_s(x+y)|^{p-2}(\phi_s(x)-\phi_s(x+y))\\&+\frac{1}{2}|\phi_s(x)-\phi_s(x-y)|^{p-2}(\phi_s(x)-\phi_s(x-y)),
\end{align*}
$$
 \delta^\pm(\phi_s,x,y) = \max(\pm \delta(\phi_s,x,y),0).
$$
By simply interchanging $y\to-y$ we have
\begin{equation}\label{eq:deltasubsol}
\int_{\R^n}\delta(\phi_s,x_0,y)K(x_0,y)\, dy\leq C,
\end{equation}
since one can easily see that the integral is well defined since $\phi_s$ is $C^2$ near $x_0$. Moreover, 
$$\delta(\phi_{s_2},x_0,y)\leq \delta(\phi_{s_1},x_0,y)\leq \delta(u,x_0,y)\text{ for }s_1< s_2< r,
$$
so that 
$$
\delta^-(u,x_0,y)\leq |\delta(\phi_r,x_0,y)|.
$$
Since $|\delta(\phi_r,x_0,y)K(x,y)|$ is integrable, so is $\delta^-(u,x_0,y)K(x,y)$. In addition, by \eqref{eq:deltasubsol}
$$
\int_{\R^n}\delta^+(\phi_s,x_0,y)K(x_0,y)\, dy\leq \int_{\R^n}\delta^-(\phi_s,x_0,y)K(x_0,y)\, dy+C.
$$
Thus, for $s_1<s_2$
\begin{align}\label{eq:srineq}
\int_{\R^n}\delta^+(\phi_{s_1},x_0,y)K(x_0,y)\, dy&\leq\int_{\R^n}\delta^-(\phi_{s_1},x_0,y)K(x_0,y)\, dy+C\\
&\leq\int_{\R^n}\delta^-(\phi_{s_2},x_0,y)K(x_0,y)\, dy+C<\infty.\nonumber
\end{align}
Since $\delta^+(\phi_s,x_0,y)\nearrow \delta^+(u,x_0,y)$, the monotone convergence theorem implies
$$
\int_{\R^n}\delta^+(\phi_s,x_0,y)K(x_0,y)\, dy \to \int_{\R^n}\delta^+(u,x_0,y)K(x_0,y)\, dy,
$$
and by \eqref{eq:srineq}
\begin{equation}\label{eq:deltaplus}
\int_{\R^n}\delta^+(u,x_0,y)K(x_0,y)\, dy\leq \int_{\R^n}\delta^-(\phi_s,x_0,y)K(x_0,y)\, dy+C<\infty,
\end{equation}
for any $0<s<r$. We conclude that $\delta^+(u,x_0,y)K(x_0,y)$ is integrable. By \eqref{eq:srineq} and the bounded convergence theorem, we can pass to the limit in the right hand side of \eqref{eq:deltaplus} and obtain
$$
\int_{\R^n}\delta (u,x_0,y)K(x_0,y)\, dy=\lim_{s\to 0}\int_{\R^n}\delta (\phi_s,x_0,y)K(x_0,y)\, dy\leq C.
$$
This implies that $Lu\,(x_0)$ exists in the pointwise sense and $Lu\, (x_0)\leq C$.
\end{proof}
\section{H\"older regularity for constant exponents}
In this section we give the proof of our main theorem for the case of constant $s$ and $p$. This is based on Lemma \ref{lem:key}, sometimes referred to as the oscillation lemma. Throughout this section, $L$ denotes an operator of the form in Theorem \ref{thm:main}, i.e.,
$$
Lu\,(x):=\pv \int_{\R^n}|u(x)-u(x+y)|^{p-2}(u(x)-u(x+y))K(x,y)\, dy.
$$
Let us also, by abuse of notation, introduce the function 
$$
\beta(x)=\beta(|x|)=\left((1-|x|^2)^+\right)^2.
$$
The exact form of $\beta$ is not important, we could have chosen any radial function which is $C^2$ and zero outside $B_1$ and non-increasing along rays from the origin.

We start with a couple of auxiliary inequalities. Here $a,b\in \R$.

\begin{lem}\label{lem:pineq1} Let $p\geq 2$. Then
$$
\big||a+b|^{p-2}(a+b)-|a|^{p-2}a\big |\leq (p-1)|b|(|a|+|b|)^{p-2}.
$$   
\end{lem}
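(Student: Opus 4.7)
The plan is to treat the map $f(t)=|t|^{p-2}t$ as the primitive whose derivative I already know. For $p\geq 2$ this map is of class $C^1(\mathbb{R})$ with
$$
f'(t)=(p-1)|t|^{p-2},
$$
(at $t=0$ this is $0$ if $p>2$ and $1$ if $p=2$, in either case continuous). So the strategy is simply to write $f(a+b)-f(a)$ as an integral of $f'$ along the segment from $a$ to $a+b$ and bound the integrand pointwise.

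Carrying this out: by the fundamental theorem of calculus,
$$
f(a+b)-f(a)=\int_0^1 f'(a+tb)\, b\, dt=(p-1)b\int_0^1|a+tb|^{p-2}\,dt.
$$
Taking absolute values and using the triangle inequality $|a+tb|\leq |a|+t|b|\leq |a|+|b|$, together with the fact that $p-2\geq 0$ so that $r\mapsto r^{p-2}$ is non-decreasing on $[0,\infty)$, yields
$$
\bigl|f(a+b)-f(a)\bigr|\leq (p-1)|b|\int_0^1(|a|+|b|)^{p-2}\,dt=(p-1)|b|(|a|+|b|)^{p-2},
$$
which is exactly the claim.

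There is no real obstacle here; the only thing to notice is that one uses $p\geq 2$ in two (compatible) places: to ensure $f$ is $C^1$ so that the fundamental theorem applies cleanly, and to ensure monotonicity of $r^{p-2}$ when passing from $|a+tb|$ to $|a|+|b|$ inside the integral.
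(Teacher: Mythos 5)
Your proof is correct and follows essentially the same route as the paper: write the difference $|a+b|^{p-2}(a+b)-|a|^{p-2}a$ as an integral of the derivative $(p-1)|\cdot|^{p-2}$ via the fundamental theorem of calculus, then bound the integrand by $(p-1)(|a|+|b|)^{p-2}$ using the triangle inequality and monotonicity of $r\mapsto r^{p-2}$ (where $p\geq 2$ is used). The only cosmetic difference is the parametrization of the path: you integrate over $t\in[0,1]$ along $a+tb$, while the paper integrates over $s\in[0,|b|]$; your version is marginally cleaner with respect to the sign of $b$, but the content is identical.
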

\begin{proof}
 We have
 \begin{align*}
 \big||a+b|^{p-2}(a+b)-|a|^{p-2}a\big|&\leq \int_0^{|b|}\Big| \frac{d}{ds} (|a+s|^{p-2}(a+s)\Big|\,ds\\
 &= \int_0^{|b|}(p-1)|a+s|^{p-2}\, ds\\
 &\leq (p-1)|b|(|a|+|b|)^{p-2}.
 \end{align*}
\end{proof}

\begin{lem}\label{lem:pineq2}
Let $p\in (1,2)$. Then
$$
 \big||a+b|^{p-2}(a+b)-|a|^{p-2}a\big|\leq (3^{p-1}+2^{p-1})|b|^{p-1}.
 $$
\end{lem}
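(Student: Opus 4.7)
The function $t \mapsto |t|^{p-2}t$ has derivative $(p-1)|t|^{p-2}$, which blows up at the origin when $p<2$, so the integration trick from Lemma \ref{lem:pineq1} cannot be applied globally. The natural remedy is a case split based on the relative size of $|a|$ and $|b|$.

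The plan is to separate the argument into two regimes. First, suppose $|a| \leq 2|b|$. Then $|a+b| \leq |a|+|b| \leq 3|b|$, so by the triangle inequality
$$
\big||a+b|^{p-2}(a+b) - |a|^{p-2}a\big| \leq |a+b|^{p-1} + |a|^{p-1} \leq 3^{p-1}|b|^{p-1} + 2^{p-1}|b|^{p-1},
$$
which is exactly the claimed bound.

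In the complementary case $|a| > 2|b|$ (and, without loss of generality, $b \neq 0$), one has $|a+tb| \geq |a| - |b| \geq |a|/2 > 0$ for all $t \in [0,1]$, so the segment from $a$ to $a+b$ stays away from zero and the fundamental theorem of calculus applies to $t \mapsto |a+tb|^{p-2}(a+tb)$. This gives
$$
\big||a+b|^{p-2}(a+b) - |a|^{p-2}a\big| \leq (p-1)|b|\int_0^1 |a+tb|^{p-2}\,dt.
$$
Since $p-2<0$, the lower bound $|a+tb| \geq |a|/2$ yields $|a+tb|^{p-2} \leq 2^{2-p}|a|^{p-2}$, and then $|a|>2|b|$ together with $p-2<0$ gives $|a|^{p-2} \leq 2^{p-2}|b|^{p-2}$. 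Combining,
$$
\big||a+b|^{p-2}(a+b) - |a|^{p-2}a\big| \leq (p-1)\cdot 2^{2-p} \cdot 2^{p-2} |b|^{p-1} = (p-1)|b|^{p-1},
$$
which is dominated by $(3^{p-1}+2^{p-1})|b|^{p-1}$ since $p-1 < 1 \leq 2^{p-1}$ for $p \in (1,2)$.

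The only subtlety is the case split; once one decides to treat $|a| \leq 2|b|$ by triangle inequality and $|a| > 2|b|$ by the fundamental theorem of calculus (taking advantage of the fact that $a+tb$ then avoids the singularity at zero), everything is essentially mechanical. The constants $3^{p-1}$ and $2^{p-1}$ in the statement are, as the Case~1 computation shows, tailored precisely to the triangle-inequality estimate there; the Case~2 bound $(p-1)|b|^{p-1}$ is strictly sharper and therefore imposes no further constraint.
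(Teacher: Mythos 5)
Your proof is correct and follows essentially the same strategy as the paper: the same case split at $|a|\leq 2|b|$ versus $|a|>2|b|$, with the triangle inequality in the first case and the fundamental theorem of calculus (exploiting that the segment from $a$ to $a+b$ avoids the singularity) in the second. The only cosmetic difference is that in Case~2 the paper bounds $|a+s|>|b|$ directly and uses $|a+s|^{p-2}<|b|^{p-2}$, whereas you pass through the intermediate bound $|a+tb|\geq |a|/2$ before comparing to $|b|$; both chains land on the same $(p-1)|b|^{p-1}$.
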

\begin{proof}
 We split the proof into two cases.
 
 \noindent {\bf Case 1: $|a|\leq 2|b|$.} Then
 $$
\big| |a+b|^{p-2}(a+b)-|a|^{p-2}a\big|\leq |a+b|^{p-1}+|a|^{p-1}\leq (3^{p-1}+2^{p-1})|b|^{p-1}.
 $$
  \noindent {\bf Case 2: $|a|> 2|b|$.} Then for $|s|\leq |b|$
  $$  
  |a+s|\geq |a|-|s|>2|b|-|b|=|b|,
  $$
  so that
  $$
\big| |a+b|^{p-2}(a+b)-|a|^{p-2}a\big|\leq \int_0^{|b|}(p-1)|a+s|^{p-2} \, ds \leq (p-1)|b|^{p-1}.
 $$
 Since $p-1\leq 3^{p-1}+2^{p-1}$, this concludes the proof.
\end{proof}

\begin{lem}\label{lem:pest}
Let $p\geq 2$ and assume $a+b\geq 0$. Then
$$
|a+b|^{p-2}(a+b)\leq 2^{p-2}(|a|^{p-2}a+|b|^{p-2}b).
$$ 
\end{lem}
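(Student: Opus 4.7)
The plan is to reduce the statement to two simpler inequalities via a case split on the signs of $a$ and $b$. The hypothesis $a+b\geq 0$ lets me rewrite the left-hand side as $(a+b)^{p-1}$, and it rules out the subcase $a,b<0$, so only two cases remain: (i) $a,b\geq 0$, and (ii) exactly one of $a,b$ is negative.

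In case (i) the desired bound $(a+b)^{p-1}\leq 2^{p-2}(a^{p-1}+b^{p-1})$ is an immediate consequence of the convexity of $t\mapsto t^{p-1}$ on $[0,\infty)$ (valid since $p\geq 2$): Jensen applied to the midpoint $\tfrac{1}{2}a+\tfrac{1}{2}b$ yields $\bigl(\tfrac{a+b}{2}\bigr)^{p-1}\leq \tfrac{1}{2}(a^{p-1}+b^{p-1})$, and multiplying through by $2^{p-1}$ gives the claim.

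For case (ii), by the symmetry of the inequality in $a$ and $b$ I may assume $b\leq 0\leq a$; then $a+b\geq 0$ forces $a\geq |b|$. Setting $c=|b|\in[0,a]$, the target reads $(a-c)^{p-1}\leq 2^{p-2}(a^{p-1}-c^{p-1})$. Since $2^{p-2}\geq 1$, it suffices to prove the sharper statement $(a-c)^{p-1}\leq a^{p-1}-c^{p-1}$; dividing by $a^{p-1}$ and letting $t=c/a\in[0,1]$ reduces the task to the scalar inequality
\[
(1-t)^{p-1}+t^{p-1}\leq 1\qquad\text{for all } t\in[0,1],\ p\geq 2.
\]
I would verify this by studying $g(t)=1-t^{p-1}-(1-t)^{p-1}$: note $g(0)=g(1)=0$ and $g'(t)=(p-1)\bigl[(1-t)^{p-2}-t^{p-2}\bigr]$, which is nonnegative on $[0,1/2]$ and nonpositive on $[1/2,1]$, so $g$ attains its minimum on $[0,1]$ at the endpoints and is therefore $\geq 0$ throughout.

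The principal (essentially only) obstacle is case (ii); case (i) is a one-line application of convexity. A minor structural remark: the factor $2^{p-2}$ is redundant in case (ii) but necessary in case (i), which is why the case split is unavoidable for a clean argument.
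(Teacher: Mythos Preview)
Your proof is correct and takes a genuinely different route from the paper's. The paper normalizes by assuming $a>0$, sets $t=b/a\in[-1,\infty)$, and studies the single-variable ratio
\[
f(t)=\frac{|1+t|^{p-2}(1+t)}{1+|t|^{p-2}t},
\]
locating its critical points (at $t=0$ and $t=1$) and checking the boundary behavior to conclude $f\le 2^{p-2}$. Your sign-based case split avoids the quotient altogether: case~(i) is a one-line convexity/Jensen argument, and case~(ii) reduces to the elementary superadditivity inequality $(1-t)^{p-1}+t^{p-1}\le 1$ on $[0,1]$, which you verify by a clean monotonicity argument. Your approach is more self-contained and makes transparent that the constant $2^{p-2}$ is needed only when $a$ and $b$ have the same sign (with equality at $a=b>0$), while in the mixed-sign case the constant $1$ already suffices; the paper's approach treats all $t\ge -1$ in one stroke and locates the extremal configuration directly, at the cost of differentiating a messier function and leaving the critical-point computation to the reader.
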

\begin{proof} The inequality is trivial for $p=2$ so we assume $p>2$. Since $a+b\geq 0$, $|a|^{p-2}a+|b|^{p-2}b\geq 0$. Without loss of generality we can assume $a>0$ and define $t=b/a$. The statement of the lemma is then equivalent to 
 $$ 
 |1+t|^{p-2}(1+t)\leq 2^{p-2}(1+|t|^{p-2}t), \text{ for }t\geq -1.
 $$
This is trivially true for $t=-1$. Hence we are lead to study the function 
$$
f(t):=\frac{|1+t|^{p-2}(1+t)}{1+|t|^{p-2}t}, \text{ for }t> -1.
$$
We find that $f$ has critical points at $t=1$ and $t=0$. In addition, 
$$
f(1)=2^{p-2}, \lim_{t\searrow -1} f(t)=0, f(0)=1, \lim_{|t|\to \infty} f(t)=1.
$$
We conclude that $f(t)\leq 2^{p-2}$ for all $t\geq -1$, and the result follows.
\end{proof}

Below we prove that a kernel $K$ behaving like $y^{-n-sp}$ satisfies certain inequalities that might look strange at a first glance, but they are exactly the ones that will appear in the proof of our key lemma later.
\begin{prop}\label{prop:fixp}
Assume $K$ satisfies $K(x,y)=K(x,-y)$ and there exist $\Lambda\geq \lambda>0$, $M>0$ and $\gamma>0$ such that
\begin{align*}
\frac{\lambda}{|y|^{n+sp}}\leq & K(x,y)\leq \frac{\Lambda}{|y|^{n+sp}}, \text{ for } y\in B_2,x\in B_2,\\ 
0\leq & K(x,y) \leq \frac{M}{|y|^{n+\gamma}}, \text{ for } y\in \R^n\setminus B_\frac14,x\in B_2, 
\end{align*}
where $s\in (0,1)$ and $p\in (1,\infty)$. In the case $p<2$ we require additionally $p>1/(1-s)$. Then for any $\delta>0$ there are $1/2\geq k>0$ and $\eta>0$ such that for $p\in (2,\infty)$
\begin{align}\label{eq:kassp2}
&2^{p-2}k^{p-1}\pv \int_{x+y\in B_1}|\beta(x)-\beta(x+y)|^{p-2}(\beta(x)-\beta(y+x))K(x,y)\, dy  \nonumber \\
&+2^{p-2}\int_{y\in \R^n\setminus B_\frac14}|k\beta(x)+2(|8y|^ \eta-1)|^{p-1} K(x,y)\, dy\\ \nonumber
&+2^{p-1}\int_{y\in \R^n\setminus B_\frac14}(|8y|^ \eta-1)^{p-1} K(x,y)\, dy<2^{1-p}\inf_{A\subset B_2,|A|>\delta}\int_A K(x,y)\,d y
\end{align}  
and for $p\in (1/(1-s),2)$
\begin{align}\label{eq:kassp1}
(3^{p-1}+2^{p-1})k^{p-1}\int_{\R^n}|\beta(x)-\beta(x+y)|^{p-1}K(x,y)\, dy\\ \nonumber
+2^{p-1}\int_{\R^n\setminus B_\frac14}(|8y|^\eta-1)^{p-1}K(x,y)\,dy<2^{1-p}\inf_{A\subset B_2,|A|>\delta}\int_A K(x,y)\,d y,
\end{align}
for any $x\in B_{3/4}$.
Here $k$ and $\eta$ depend on $\lambda,\Lambda,M,p,s,\gamma$ and $\delta$.
\end{prop}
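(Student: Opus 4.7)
The plan is to show that the right-hand side of \eqref{eq:kassp2} (or \eqref{eq:kassp1}) admits a positive lower bound independent of $k$ and $\eta$, and that every term on the left-hand side can be made arbitrarily small by choosing first $\eta$ and then $k$ small. For the lower bound, since $K(x,y)\ge \lambda|y|^{-n-sp}\ge \lambda 2^{-n-sp}$ for $y\in B_2$, every measurable $A\subset B_2$ with $|A|>\delta$ satisfies $\int_A K(x,y)\,dy\ge \lambda 2^{-n-sp}\delta=:c_0>0$, depending only on $\lambda,n,s,p,\delta$. It therefore suffices to make the sum of LHS terms strictly smaller than $2^{1-p}c_0$.

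For the tail integrals $\int_{\R^n\setminus B_{1/4}}(|8y|^\eta-1)^{p-1}K(x,y)\,dy$, I would split into $B_2\setminus B_{1/4}$ (using $K\le \Lambda|y|^{-n-sp}$) and $\R^n\setminus B_2$ (using $K\le M|y|^{-n-\gamma}$). Picking any $\eta\in(0,\gamma/(p-1))$ produces an integrable majorant independent of small $\eta$, and since $|8y|^\eta-1\to 0$ pointwise as $\eta\to 0^+$, dominated convergence drives these integrals to zero. In the $p>2$ case, the middle term in \eqref{eq:kassp2} is handled by expanding $(a+b)^{p-1}\le 2^{p-2}(a^{p-1}+b^{p-1})$ with $a=k\beta(x)$, $b=2(|8y|^\eta-1)$, reducing it to a tail-type integral plus a $k^{p-1}$-multiple of $\int_{\R^n\setminus B_{1/4}} K(x,y)\,dy$, which is finite by the same splitting.

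For the $\beta$-integrals, the singular case $p\in(1/(1-s),2)$ is the more transparent: Lipschitz continuity gives $|\beta(x)-\beta(x+y)|\le C\min(|y|,1)$, so near $y=0$ the integrand is of order $|y|^{p-1}K(x,y)\le C|y|^{p(1-s)-1-n}$, integrable precisely because of the hypothesis $p(1-s)>1$; away from the origin the $M|y|^{-n-\gamma}$ bound takes care of integrability. For $p>2$, the PV integral is subtler, and I would use $K(x,y)=K(x,-y)$ to symmetrize the integrand as $\tfrac12(g(y)+g(-y))$ with $g(y)=|\beta(x)-\beta(x+y)|^{p-2}(\beta(x)-\beta(x+y))$. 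Since $x\in B_{3/4}$, the function $\beta$ is smooth near $x$, so setting $u=\beta(x)-\beta(x+y)$, $v=\beta(x)-\beta(x-y)$ one has $|u|,|v|\le C|y|$ and $u+v=O(|y|^2)$. Writing $g(y)+g(-y)=|u|^{p-2}u-|-v|^{p-2}(-v)$ and applying Lemma \ref{lem:pineq1} with arguments $-v$ and $u+v$ gives $|g(y)+g(-y)|\le (p-1)|u+v|(|v|+|u+v|)^{p-2}\le C|y|^p$. Combined with $K\le \Lambda|y|^{-n-sp}$, this yields an integrable quantity of order $|y|^{p(1-s)-n}$, so the PV integral is finite and the $k^{p-1}$-factor sends it to zero as $k\to 0$.

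Putting it together: first pick $\eta\in(0,\gamma/(p-1))$ small enough that the tail contributions are $<2^{-p}c_0$; then pick $k$ small enough that the $k^{p-1}$-contributions are $<2^{-p}c_0$; adding gives the required strict inequality. The chief technical obstacle is the principal value estimate for $p>2$: taming the singularity at $y=0$ requires exploiting simultaneously the symmetry of $K$ (to generate cancellation between $g(y)$ and $g(-y)$), the $C^2$ regularity of $\beta$ at $x\in B_{3/4}$ (to bound that cancellation by $|y|^2$), and Lemma \ref{lem:pineq1} (to upgrade an $O(|y|^2)$ cancellation into the $O(|y|^p)$ decay needed to overcome the $|y|^{-n-sp}$ singularity of $K$).
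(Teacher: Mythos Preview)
Your proposal is correct and follows essentially the same route as the paper: lower-bound the right-hand side via $K\ge\lambda 2^{-n-sp}$ on $B_2$, send the tail integrals to zero by dominated convergence under $\eta<\gamma/(p-1)$, and for $p>2$ tame the PV integral by symmetrizing through $K(x,y)=K(x,-y)$ and invoking Lemma~\ref{lem:pineq1} together with the $C^2$ bound on $\beta$ to get $O(|y|^p)$ near the origin. The paper makes one step slightly more explicit than you do: since the domain $\{x+y\in B_1\}$ is not symmetric in $y$, the symmetrization is performed only on $y\in B_{1/4}$ (which lies inside the domain for $x\in B_{3/4}$), with the remaining annulus handled directly by the boundedness of $\beta$ and the decay of $K$.
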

\begin{proof}
The proof is split into two different cases.\\
\noindent {\bf Case 1: $p>2$}\\
The first term in the left hand side of \eqref{eq:kassp2} reads
\begin{align*}
&2^{p-2}k^{p-1}\pv \int_{x+y\in B_1}|\beta(x)-\beta(x+y)|^{p-2}(\beta(x)-\beta(y+x))K(x,y)\, dy\\
=&2^{p-2}k^{p-1}\pv \int_{x+y\in B_1,y\not\in B_\frac14}|\beta(x)-\beta(x+y)|^{p-2}(\beta(x)-\beta(y+x))K(x,y)\, dy\\
&+2^{p-2}k^{p-1}\pv \int_{y\in B_\frac14}|\beta(x)-\beta(x+y)|^{p-2}(\beta(x)-\beta(y+x))K(x,y)\, dy\\
&=I_1+I_2.
\end{align*}
Since $\beta$ is uniformly bounded by a constant $C$, we can, using the upper bound on $K$ outside $B_{1/4}$, obtain
\begin{equation}\label{eq:case1est1a}
|I_1|\leq |2kC|^{p-1}\int_{\R^n\setminus B_\frac14} K(x,y)\,dy\leq |2kC|^{p-1} M\int_{\R^n\setminus B_\frac14} \frac{dy}{|y|^{n+\gamma}},  
\end{equation}
which is finite and converges to zero as $k\to 0$.

For $I_2$ we proceed as follows
\begin{align*}
I_2&=2^{p-2}k^{p-1}\pv \int_{y\in B_\frac14}|\beta(x)-\beta(x+y)|^{p-2}(\beta(x)-\beta(y+x))K(x,y)\, dy\\
&=2^{p-3}k^{p-1}\pv \int_{y\in B_\frac14}|\beta(x)-\beta(x+y)|^{p-2}(\beta(x)-\beta(y+x))K(x,y)\,dy \\
&+2^{p-3}k^{p-1}\pv \int_{y\in B_\frac14}|\beta(x)-\beta(-y+x)|^{p-2}(\beta(x)-\beta(-y+x))K(x,y)\,dy.
\end{align*}
Introducing the notation
$$
F=-(\beta(x)-\beta(x-y)),\quad G=(\beta(x)-\beta(x-y))+(\beta(x)-\beta(x+y)), 
$$
$I_2$ can be written as
\begin{align*}
&2^{p-3}k^{p-1}\int_{y\in B_\frac14}\left(|F+G|^{p-2}(F+G)-|F|^{p-2}F\right) K(x,y) \,dy\\
&\leq 2^{p-3}k^{p-1}(p-1)\int_{y\in B_\frac14}|G|(|F|+|G|)^{p-2}K(x,y)\,dy,
\end{align*}
by Lemma \ref{lem:pineq1}. Since $\beta$ is $C^2$, $|F|\leq C|y|$ and $|G|\leq C|y|^2$. Invoking the upper bound on $K$ in $B_2$ yields the estimate
\begin{equation}\label{eq:case1est1}
I_2\leq C^{p-1}2^{p-3}k^{p-1}(p-1)\Lambda\int_{y\in B_\frac14}|y|^{p-n-sp}\, dy\leq \frac{C^{p-1}2^{p-3}k^{p-1}(p-1)\Lambda \left(\frac14\right)^{p(1-s)}}{p(1-s)},
\end{equation}
where $C$ only depends on the $C^2$-norm of $\beta$, which is fixed. Clearly the left hand side of \eqref{eq:case1est1} goes to zero as $k\to 0$.

For the rest of the terms in the left hand side we observe first that if $\eta<\gamma/(p-1)$ then from the upper bound on $K$ outside $B_{1/4}$
\begin{equation}\label{eq:case1est2}
\int_{\R^n\setminus B_\frac14}\left(|8y|^\eta-1\right)^{p-1} K(x,y)\, dy\leq M\int_{\R^n\setminus B_\frac14}\left(|8y|^\eta-1\right)^{p-1} \frac{dy }{|y|^{n+\gamma}}, 
\end{equation}
which is uniformly bounded and tends to zero as $\eta \to 0$, by the dominated convergence theorem.

In addition, since $\beta$ is uniformly bounded by some constant $C>0$ we have
\begin{equation}\label{eq:case1est3}
\int_{\R^n\setminus B_\frac14}|k\beta(x)|^{p-1} K(x,y)\,dy\leq k^{p-1}C^{p-1}M\int_{\R^n\setminus B_\frac14} \frac{dy}{|y|^{n+\gamma}},  
\end{equation}
which is finite and converges to zero as $k\to 0$, where we again have used the upper bound on $K$ outside $B_{1/4}$.

Thus, if we choose $\eta$ and $k$ small enough (depending on $\Lambda$, $M$, $p$, $s$ and $\gamma$) we can make all the terms in the left hand side as small as desired.

Now we turn our attention to the right hand side. We have, due to the lower bound on $K$ in $B_2$
$$
2^{1-p}\inf_{A\subset B_2,|A|>\delta}\int_{A}K(x,y)\,dy \geq \frac{2^{1-p}\lambda \delta}{2^{n+sp}}.
$$
Then it is clear that we can choose $\eta$ and $k$, depending only on $\lambda$, $\Lambda$, $M$, $p$, $s$, $\gamma$ and $\delta$, so that the left hand side is larger than the right hand side.

\noindent {\bf Case 2: $1/(1-s)<p<2$}\\
The only difference from the case $p>2$ is the first term in the left hand side. We need to show that for $k$ small enough, the term 
$$(3^{p-1}+2^{p-1})k^{p-1}\int_{\R^n}|\beta(x)-\beta(x+y)|^{p-1}K(x,y)\, dy,$$
is small. We split the integral into two parts, one in $B_1$ and one in $\R^n\setminus B_1$. We have $|\beta(x)-\beta(x+y)|\leq C|y|$ for $y\in B_1$ and $|\beta(z)|\leq C$ for all $z\in \R^n$. Hence, 
\begin{align}
&(3^{p-1}+2^{p-1})k^{p-1}\int_{B_1}|\beta(x)-\beta(x+y)|^{p-1}K(x,y)\, dy\nonumber \\\label{eq:case2est1}
&\leq \Lambda C^{p-1}(3^{p-1}+2^{p-1})k^{p-1}\int_{B_1}|y|^{p-1-n-sp}\,dy\\\nonumber
&\leq \Lambda C^{p-1}(3^{p-1}+2^{p-1})k^{p-1}\frac{1}{p(1-s)-1},
\end{align}
where we have used the upper bound on $K$ in $B_2$. For the part outside $B_1$ we have
\begin{align}
&(3^{p-1}+2^{p-1})k^{p-1}\int_{\R^n\setminus B_1}|\beta(x)-\beta(x+y)|^{p-1}K(x,y)\, dy\nonumber \\\label{eq:case2est2}
&\leq C^{p-1}M(3^{p-1}+2^{p-1})k^{p-1}\int_{\R^n\setminus B_1}\frac{dy}{|y|^{n+\gamma}}\\\nonumber
&\leq C^{p-1}M(3^{p-1}+2^{p-1})k^{p-1}\gamma^{-1},
\end{align}
from the upper bound on $K$ outside $B_{1/4}$.
By choosing $k$ small (depending on $\Lambda$, $M$, $p$, $s$, $\gamma$) we can make both of these terms as small as desired. Hence, the result follows as in the case $p>2$.

\end{proof}

\begin{rem}
We remark that in the proof above, nothing would change if the exponents would depend on $x$, since $x$ is a fixed point. This is important later when we redo the proof for the case of variable exponents.
\end{rem}

The lemma below is the core of this paper. The proof is an adaptation of the proof of Lemma 4.1 in \cite{Sil06}.

\begin{lem}\label{lem:key} Assume the hypotheses of Proposition \ref{prop:fixp}. Suppose 
\begin{align*}
Lu\leq 0\text{ in }B_1,\\
u\leq 1\text{ in }B_1,\\
u(x)\leq 2|2x|^\eta-1\text{ in }\R^n\setminus B_1,\\
|B_1\cap \{u\leq 0\}|>\delta,
\end{align*}
where $\eta$ is as in Proposition \ref{prop:fixp}.
Then $u\leq 1-\theta$ in $B_{1/2}$, where $\theta =\theta(\lambda,\Lambda,M,p,s,\gamma,\delta)>0$.
\end{lem}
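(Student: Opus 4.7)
The plan is to argue by contradiction, following the strategy of Silvestre \cite{Sil06}. Assume there exists $x_0\in B_{1/2}$ with $u(x_0)>1-\theta$, for a small $\theta>0$ to be fixed at the end. With the given $\delta$, let $k\in(0,1/2]$ and $\eta>0$ be the parameters provided by Proposition \ref{prop:fixp} (the same $\eta$ appearing in the hypotheses). I would then consider the bumped function $v(x):=u(x)+k\beta(x)$. Since $\beta(x_0)\geq (1-1/4)^2=9/16$, one has $v(x_0)>1-\theta+9k/16$. The first step is to argue that the supremum of $v$ is attained at some point $\bar x\in B_{3/4}$: using $\beta(x)\leq (7/16)^2=49/256$ on $B_1\setminus B_{3/4}$, $u\leq 1$ in $B_1$, and the growth bound $u(x)\leq 2|2x|^\eta-1$ outside $B_1$ together with $\eta$ small, one checks that $v<v(x_0)$ on $\R^n\setminus B_{3/4}$ provided $\theta$ is sufficiently small compared to $k$.

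At $\bar x$, the inequality $v(\bar x)\geq v(y)$ translates to $u(y)\leq \Phi(y):=v(\bar x)-k\beta(y)$ with equality at $\bar x$, so $u$ is touched from above by the $C^2$ function $\Phi$. By Proposition \ref{prop:pw}, $Lu(\bar x)$ is defined in the pointwise sense and $Lu(\bar x)\leq 0$. The heart of the proof is to show that this integral is in fact strictly positive, yielding the required contradiction.

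I would estimate $Lu(\bar x)$ from below by decomposing the domain of integration. On the good set $A:=\{u\leq 0\}\cap B_1$, of measure $>\delta$, since $u(\bar x)\geq v(x_0)-k\beta(\bar x)\geq 1-\theta-7k/16\geq 1/2$ for $\theta,k$ small, the positive part of the integrand satisfies $(u(\bar x)-u(\bar x+y))^{p-1}\geq (1/2)^{p-1}=2^{1-p}$ on $A-\bar x\subset B_2$, giving a lower bound of $2^{1-p}\inf_{A'\subset B_2,|A'|>\delta}\int_{A'}K(\bar x,y)\,dy$, exactly matching the right-hand side of \eqref{eq:kassp2}--\eqref{eq:kassp1}. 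For the negative part, the maximality gives $u(\bar x+y)-u(\bar x)\leq k(\beta(\bar x)-\beta(\bar x+y))$ for every $y$; in the region $\bar x+y\in B_1$, the $C^2$ smoothness of $\beta$ combined with Lemma \ref{lem:pineq1} (when $p>2$) or Lemma \ref{lem:pineq2} (when $p<2$) produces the $\beta$-integrals on the left-hand side of \eqref{eq:kassp2}--\eqref{eq:kassp1}; for $\bar x+y\notin B_1$ (so $|y|\geq 1/4$), combining the above with the growth bound and the elementary inequality $|2(\bar x+y)|^\eta\leq |8y|^\eta$ (valid for $\bar x\in B_{3/4}$, $|y|\geq 1/4$) yields $u(\bar x+y)-u(\bar x)\leq k\beta(\bar x)+2(|8y|^\eta-1)+O(\theta)$, producing the outer integrals in \eqref{eq:kassp2}--\eqref{eq:kassp1}. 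For $p>2$, the splitting of the positive part into the $\beta$-piece and the growth-piece is effected by Lemma \ref{lem:pest}, which is the source of the $2^{p-2}$ factors.

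Putting everything together via Proposition \ref{prop:fixp}, the negative contribution to $Lu(\bar x)$ is strictly smaller than the positive contribution from $A$, hence $Lu(\bar x)>0$, contradicting $Lu(\bar x)\leq 0$. The main obstacle will be the nonlinearity in $p$: the integrand $|w|^{p-2}w$ does not split linearly as in the $p=2$ case of \cite{Sil06}, and Lemmas \ref{lem:pineq1}--\ref{lem:pest} are precisely what is needed to reduce the bookkeeping to the kernel inequalities of Proposition \ref{prop:fixp}. In the singular case $p<2$, the integrability condition $p(1-s)>1$ is essential, as it guarantees that the integral $\int_{B_1}|y|^{p-1-n-sp}\,dy$ controlling the smooth $\beta$-contribution near $y=0$ is finite.
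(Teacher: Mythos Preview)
Your overall strategy---contradiction via the bump $k\beta$, locating the maximum in $B_{3/4}$, invoking Proposition~\ref{prop:pw}, and reducing to the kernel inequalities of Proposition~\ref{prop:fixp}---matches the paper. Two points deserve correction, one minor and one substantive.

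\textbf{Minor.} The supremum of $v=u+k\beta$ is taken only over $B_1$, not over $\R^n$: the growth bound $u\leq 2|2x|^\eta-1$ permits $u$ (hence $v$, since $\beta=0$ there) to exceed $v(x_0)$ for large $|x|$. This is harmless because you only need the maximality inequality $u(\bar x+y)-u(\bar x)\leq k(\beta(\bar x)-\beta(\bar x+y))$ for $\bar x+y\in B_1$; outside $B_1$ you use the growth bound directly, as you indicate.

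\textbf{Substantive.} The paper does \emph{not} estimate $Lu(\bar x)$ directly; it estimates $L(u+k\beta)(\bar x)$ from above and from below. For $p>2$ this distinction matters. Your pointwise bound on the negative contribution inside $B_1$ gives at best
\[
(u(\bar x+y)-u(\bar x))^{p-1}\leq k^{p-1}\big|\beta(\bar x)-\beta(\bar x+y)\big|^{p-1},
\]
and the resulting integral $\int_{B_1}|\beta(\bar x)-\beta(\bar x+y)|^{p-1}K(\bar x,y)\,dy$ behaves like $\int_0^1 r^{p(1-s)-2}\,dr$, which \emph{diverges} whenever $p(1-s)\leq 1$ (e.g.\ $p=3$, $s=0.9$). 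The paper circumvents this by applying Lemma~\ref{lem:pest} to the \emph{nonnegative} quantity $(u+k\beta)(\bar x)-(u+k\beta)(\bar x+y)$, which splits the integrand of $L(u+k\beta)$ into $2^{p-2}$ times the integrand of $Lu$ plus $2^{p-2}k^{p-1}$ times the \emph{signed} integrand of $L\beta$. The signed $\beta$-integral is exactly the principal-value term in \eqref{eq:kassp2}, whose finiteness relies on second-order cancellation ($|y|^{p}$ rather than $|y|^{p-1}$ near $0$); this is where Lemma~\ref{lem:pineq1} enters---but inside the proof of Proposition~\ref{prop:fixp}, not here. Your proposal inverts the roles of Lemmas~\ref{lem:pineq1} and~\ref{lem:pest}: in the proof of Lemma~\ref{lem:key} for $p>2$, Lemma~\ref{lem:pest} is the essential splitting tool, and Lemma~\ref{lem:pineq1} plays no role. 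For $p<2$ your direct approach via Lemma~\ref{lem:pineq2} is fine and coincides with the paper's.
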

\begin{proof} We argue by contradiction. Let
$$  
\theta=k\left(\beta(1/2)-\beta(3/4)\right),
$$
where $k$ is as in Proposition \ref{prop:fixp}. If there is $x_0\in B_{1/2}$ such that $u(x_0)>1-\theta$, then 
$$
u(x_0)+k\beta(1/2)>1+k\beta(3/4).
$$
Moreover, for any $y\in B_1\setminus B_{3/4}$ there holds
$$
u(x_0)+k\beta(x_0)>u(x_0)+k\beta(1/2)>1+k\beta(3/4)\geq u(y)+k\beta(y).
$$
Hence, the maximum of $u+k\beta$ in $B_1$ is attained inside $B_{3/4}$ and it is strictly larger than 1. Suppose that the maximum is attained at the point $x$.

The rest of the proof is devoted to estimating $L(u+k\beta)\,(x)$ from above and from below in order to obtain a contradiction with Proposition \ref{prop:fixp}. At this point, we remark that $-k\beta+(u+k\beta)(x)$ touches $u$ from above at $x$. Hence, by Proposition \ref{prop:pw}, $Lu\,(x)\leq 0$ in the pointwise sense.


We first estimate $L(u+k\beta)\, (x)$ from below. We split the integrals into two parts and write
\begin{align*}
L(u+k\beta)\, (x)&=\pv \int_{x+y\in B_1}+\int_{x+y\not\in B_1}\\
&=\lim_{r\to 0}\int_{x+y\in B_1,y\not\in B_r}+\int_{x+y\not\in B_1}=\lim_{r\to 0} I_r+I_2,
\end{align*}
where there is no need for the principal value in the second integral, since $x\in B_{3/4}$. Using that $u(x)+k\beta(x)>1$ is the maximum of $u+k\beta$ in $B_1$ we see that the integrand in $I_r$ is non-negative and we have the estimate
\begin{align*}
 I_r\geq  \int_{A_0}(1-k\beta(x+y))^{p-1}K(x,y)\, dy, 
\end{align*}
where 
$$A_0=\{x+y\in B_1,\quad u(x+y)\leq 0\}.$$
Since $\beta \leq 1$ and $k\leq 1/2$ we conclude
$$
I_r\geq \frac{1}{2^{p-1}}\inf_{A_0\subset B_2,|A_0|>\delta}\int_{A_0}K(x,y)\,d y.
$$
Now we estimate $I_2$ from below. Using that $u(x)+k\beta(x)>1$ and $u(z)\leq 2|2z|^\eta-1$ for $z\in \R^n\setminus B_1$ and $\beta=0$ in $\R^n\setminus B_1$, we have
\begin{align*}
 I_2&\geq \int_{x+y\not\in B_1} 2^{p-1}\Big|1-|2(x+y)|^\eta\Big|^{p-2}(1-|2(x+y)|^\eta) K(x,y)\, dy\\
 &\geq 2^{p-1}\int_{y\not\in B_\frac14}\Big|1-\Big|2\left(|y|+\frac34\right)\Big|^\eta\Big|^{p-2}\left(1-\Big|2\left(|y|+\frac34\right)\Big|^\eta\right)K(x,y)\, dy\\
 &\geq -2^{p-1}\int_{y\not\in B_\frac14}(|8y|^\eta-1)^{p-1} K(x,y)\, dy.
\end{align*}
Adding the two estimates together we can summarize
\begin{align}\label{eq:Lfrombelow}
&L(u+k\beta)\,(x)\geq \\
\nonumber & \frac{1}{2^{p-1}}\inf_{A_0\subset B_2,|A_0|>\delta}\int_{A_0}K(x,y)\,d y-2^{p-1}\int_{y\not\in B_\frac14}(|8y|^\eta-1)^{p-1} K(x,y)\, dy.
\end{align}

The next step is to estimate $L(u+k\beta)\, (x)$ from above. This part of the proof is split into two cases: $p\geq 2$ and $p<2$. 

\noindent {\bf Case 1: $p\geq 2$}\\
Again we split the integral defining $L(u+k\beta)\, (x)$ into two parts
$$
L(u+k\beta)\, (x)=\pv \int_{x+y\in B_1}+\int_{x+y\not\in B_1}:=I_1+I_2,
$$
where again, there is no need for the principal value in the second integral. We first treat $I_1$ by noting that when $x+y\in B_1$, we know
$$
u(x)+k\beta(x)-u(x+y)-k\beta(x+y)\geq 0,
$$
recalling that $u+k\beta$ attains its maximum (in $B_1$) at $x$.

From Lemma \ref{lem:pest}
\begin{align*}
 |u(x)&-u(x+y)+ k\beta(x)-k\beta(x+y)|^{p-2}(u(x)-u(x+y)+k\beta(x)-k\beta(x+y))\leq \\
&2^{p-2}|u(x)-u(x+y)|^{p-2}(u(x)-u(x+y))\\
+&2^{p-2}|k\beta(x)-k\beta(x+y)|^{p-2}(k\beta(x)-k\beta(x+y)).
\end{align*}
Hence, 
\begin{align*}
I_1&\leq 2^{p-2}\pv\int_{x+y\in B_1} |u(x)-u(x+y)|^{p-2}(u(x)-u(x+y))K(x,y)\,dy  \\
&+ 2^{p-2}k^{p-1}\pv\int_{x+y\in B_1} |\beta(x)-\beta(x+y)|^{p-2}(\beta(x)-\beta(x+y)) K(x,y)\,dy.
\end{align*}
Now we turn our attention to $I_2$. We note that when $x+y\not\in B_1$, we cannot apply Lemma \ref{lem:pest} directly, but we still have from the hypothesis
$$
u(x)+k\beta(x)>1,\quad u(x+y)+k\beta(x+y)\leq 2|2(x+y)|^\eta-1.
$$
In other words, 
$$
u(x)-u(x+y)+k\beta(x)-k\beta(x+y)>2(1-|2(x+y)|^\eta).
$$
By adding the term $2(|2(x+y)|^\eta-1)>0$ to the the expression, we increase the integrand, and we also make the integrand non-negative so that we can, once more, apply Lemma \ref{lem:pest}. It follows that
\begin{align*}
I_2&\leq \int_{x+y\not\in B_1}|u(x)-u(x+y)+k\beta(x)-k\beta(x+y)+2(|2(x+y)|^\eta-1)|^{p-2}\times \\
&(u(x)-u(x+y)+k\beta(x)-k\beta(x+y)+2(|2(x+y)|^\eta-1))K(x,y)\, dy\\
&\leq 2^{p-2}\int_{x+y\not\in B_1} |u(x)-u(x+y)|^{p-2}(u(x)-u(x+y))K(x,y)\,dy\\
&+2^{p-2}\int_{x+y\not\in B_1}|k\beta(x)-k\beta(x+y)+2(|2(x+y)|^\eta-1)|^{p-2}\times \\
&(k\beta(x)-k\beta(x+y)+2(|2(x+y)|^\eta-1))K(x,y)\, dy.
\end{align*}
Adding the estimates for $I_1$ and $I_2$ together we arrive at
\begin{align}\nonumber 
&L(u+k\beta)\,(x)\leq 2^{p-2}Lu\, (x)\\\nonumber 
&+2^{p-2}k^{p-1}\pv\int_{x+y\in B_1} |\beta(x)-\beta(x+y)|^{p-2}(\beta(x)-\beta(x+y)) K(x,y)\,dy\\\nonumber 
&+2^{p-2}\int_{x+y\not\in B_1}|k\beta(x)-k\beta(x+y)+2(|2(x+y)|^\eta-1)|^{p-2}\times\\ \label{eq:Lfromabove1}ß
&(k\beta(x)-k\beta(x+y)+2(|2(x+y)|^\eta-1))K(x,y)\, dy\\\nonumber 
&\leq 2^{p-2}k^{p-1}\pv\int_{x+y\in B_1} |\beta(x)-\beta(x+y)|^{p-2}(\beta(x)-\beta(x+y)) K(x,y)\,dy\\\nonumber 
&+2^{p-2}\int_{x+y\not\in B_1}|k\beta(x)-k\beta(x+y)+2(|2(x+y)|^\eta-1)|^{p-1}K(x,y)\, dy,
\end{align}
since $Lu\,(x)\leq 0$.

\noindent {\bf Case 2: $\frac{1}{1-s}<p<2$}\\ 
From Lemma \ref{lem:pineq2}
\begin{align*}
|u(x)-u(x+y)+k\beta(x)-k\beta(x+y)|^{p-2}(u(x)-u(x+y)+k\beta(x)-k\beta(x+y))\leq \\
|u(x)-u(x+y)|^{p-2}(u(x)-u(x+y))+(3^{p-1}+2^{p-1})k^{p-1}|\beta(x)-\beta(x+y)|^{p-1}
\end{align*}
from which it follows that
\begin{align}
L(u+k\beta)\,(x)&\leq Lu\,(x)+k^{p-1}(3^{p-1}+2^{p-1})\int_{\R^n}|\beta(x)-\beta(x+y)|^{p-1} K(x,y)\,d y\label{eq:Lfromabove2}\\\nonumber
&\leq k^{p-1}(3^{p-1}+2^{p-1})\int_{\R^n}|\beta(x)-\beta(x+y)|^{p-1} K(x,y)\,d y.
\end{align}

Finally, we arrive at a contradiction by observing that \eqref{eq:Lfrombelow} combined with either \eqref{eq:Lfromabove1} or \eqref{eq:Lfromabove2} results in a contradiction with \eqref{eq:kassp2} or \eqref{eq:kassp1} in Proposition \ref{prop:fixp}.
\end{proof}
Once the lemma above is established, the proof of the H\"older regularity is standard. We follow the lines of the proof of Theorem 5.1 in \cite{Sil06}.

\begin{proof}[~Proof of Theorem \ref{thm:main}]We first rescale $u$ by the factor
$$
\frac{1}{2\|u\|_{L^\infty(\R^n)}}.
$$
Then the new $u$ satisfies
$$
L u =0\text{ in }B_1,\quad \osc_{\R^n}u \leq 1.
$$
We will now show that for $j=0,1,\ldots$
$$
\osc_{B_{2^{-j}(x_0)}} u\leq 2^{-j\alpha}, \text{ for any }x_0\in B_1, 
$$
where $\alpha$ is chosen so that 
$$
\frac{2-\theta}{2}\leq 2^{-\alpha}\text{ and } \alpha\leq \eta, 
$$
where $\theta$ is from Lemma \ref{lem:key} and $\eta$ is from Proposition \ref{prop:fixp}, with  $\delta=|B_1|/2$. This will imply the desired result with $C=2^{\alpha}$.

In what follows we will find constants $a_j$ and $b_j$ so that
\begin{equation}\label{eq:akbk}
b_j\leq u\leq a_j\text{ in  }B_{2^{-j}(x_0)},\quad |a_j-b_j|\leq 2^{-j\alpha}.
\end{equation}
We construct these by induction. For $j\leq 0$, \eqref{eq:akbk} holds true with $b_j=\inf_{\R^n} u $ and $a_j=b_j+1$.

Assume \eqref{eq:akbk} holds for all $j\leq k$. We need to construct $a_{k+1}$ and $b_{k+1}$. Put $m=(a_k+b_k)/2$. Then
$$
|u-m|\leq 2^{-k\alpha-1}\text{ in  $B_{2^{-k}}(x_0)$.}
$$
Let 
$$
v(x)=2^{\alpha k+1}(u(2^{-k}x+x_0)-m).
$$
Then 
$$
\pv \int_{\R^n}|v(x)-v(x+y)|^{p-2}(v(x)-v(x+y))K_{x_0,2^{-k}}(x,y)\, dy= 0 \text{ in }B_1
$$
and
$$
 |v|\leq 1 \text{ in }B_1,
$$
where 
$$
K_{x_0,2^{-k}}(x,y)=2^{-k(n+sp)}K(2^{-k}x+x_0,2^{-k}y),$$
which satisfies the same assumptions as $K$ itself. We also remark for $|y|>1$ such that $2^\ell\leq |y|\leq 2^{\ell+1}$ we have
\begin{align*}
v(y)= 2^{\alpha k+1}(u(2^{-k}y+x_0)-m)&\leq 2^{\alpha k+1}(a_{k-\ell-1}-m)\\
&\leq 2^{\alpha k+1}(a_{k-\ell-1}-b_{k-\ell-1}+b_{k}-m)\\
&\leq 2^{\alpha k+1}(2^{-\alpha(k-\ell-1)}-\frac12 2^{-k\alpha})\\
&\leq 2^{1+\alpha(\ell+1)}-1\leq 2|2y|^\alpha-1\\
&\leq 2|2y|^\eta-1,
\end{align*}
where we have used that \eqref{eq:akbk} holds for $j\leq k$. Suppose now that \mbox{$|\{v\leq 0\}\cap B_1|\geq |B_1|/2$} (if not we would apply the same procedure to $-v$). Then $v$ satisfies all the assumptions of Lemma \ref{lem:key}, with $\delta =|B_1|/2$ and we obtain
$$
v(x)\leq 1-\theta\text{ in }B_\frac12, 
$$
where $\theta=\theta(\lambda,\Lambda,M,p,s,\gamma)$, since $\delta$ is fixed. Scaling back to $u$ this yields
\begin{align*}
u(x)&\leq 2^{-1-\alpha k}(1-\theta)+m\leq 2^{-1-k\alpha}(1-\theta)+\frac{a_k+b_k}{2}\\
&\leq b_k+2^{-1-\alpha k}(1-\theta)+2^{-1-\alpha k}\\
&\leq b_k+2^{-\alpha (k+1)}
\end{align*}
by our choice of $\alpha$. Hence, if we let $b_{k+1}=b_k$ and $a_{k+1}=b_k+2^{-\alpha(k+1)}$ we obtain \eqref{eq:akbk} for the step $j=k+1$ and the induction is complete.
\end{proof} 
\section{Variable exponents}
In this section we show that our results also apply to the case when both $p$ and $s$ vary with $x$. In particular we prove Theorem \ref{thm:main2}. Throughout this section $L$ denotes the operator
$$
Lu\,(x):=\pv \int_{\R^n}|u(x)-u(x+y)|^{p(x)-2}(u(x)-u(x+y))K(x,y)\, dy.
$$
We follow the same strategy as in the case of constant exponents and prove slightly modified versions of Proposition \ref{prop:fixp} and Lemma \ref{lem:key}. The proof of H\"older continuity is then similar.

\begin{prop}\label{prop:varp}
Assume $K$ satisfies $K(x,y)=K(x,-y)$ and there exist $\Lambda\geq \lambda>0$, $M>0$ and $\gamma>0$ such that
\begin{align*}
\frac{\lambda}{|y|^{n+s(x)p(x)}}\leq & K(x,y)\leq \frac{\Lambda}{|y|^{n+s(x)p(x)}}, \text{ for } y\in B_2,x\in B_2,\\ 
0\leq & K(x,y) \leq \frac{M}{|y|^{n+\gamma}}, \text{ for } y\in \R^n\setminus B_\frac14,x\in B_2, 
\end{align*}
where $0<s_0<s(x)<s_1<1$ and $1<p_0<p(x)<p_1<\infty$. In the case $p(x)<2$ we require additionally that there is $\tau>0$ such that
$$
p(x)(1-s(x))-1>\tau .$$ 
Then for any $\delta>0$ there are $1/2 \geq k>0$ and $\eta>0$ such that for $p\in (2,\infty)$
\begin{align}\label{eq:varkassp2}
&2^{p(x)-2}k^{p(x)-1}\pv \int_{x+y\in B_1}|\beta(x)-\beta(x+y)|^{p(x)-2}(\beta(x)-\beta(y+x))K(x,y)\, dy  \nonumber \\
&+2^{p(x)-2}\int_{y\in \R^n\setminus B_\frac14}|k\beta(x)+2((|8y|^ \eta-1)|^{p(x)-1} K(x,y)\, dy\\ \nonumber
&+2^{p(x)}\int_{y\in \R^n\setminus B_\frac14}((|8y|^ \eta-1)|^{p(x)-1} K(x,y)\, dy<2^{1-p(x)}\inf_{A\subset B_2,|A|>\delta}\int_A K(x,y)\,d y
\end{align}  
and for $p(x)\in (1/(1-s),2)$
\begin{align}\label{eq:varkassp1}
(3^{p(x)-1}+2^{p(x)-1})k^{p(x)-1}\int_{\R^n}|\beta(x)-\beta(x+y)|^{p(x)-1}K(x,y)\, dy\\ \nonumber
+2^{p(x)}\int_{\R^n\setminus B_\frac14}(|8y|^\eta-1)^{p(x)-1}K(x,y)\,dy<2^{1-p(x)}\inf_{A\subset B_2,|A|>\delta}\int_A K(x,y)\,d y,
\end{align}
for any $x\in B_{3/4}$.
Here $k$ and $\eta$ depend on $\lambda,\Lambda,M,p_0,p_1,s_0,s_1,\gamma,\tau$ and $\delta$.
\end{prop}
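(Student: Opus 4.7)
The plan is to run the proof of Proposition \ref{prop:fixp} essentially verbatim, making use of the remark immediately following it: all of its estimates are pointwise in $x$, so for fixed $x\in B_{3/4}$ we may treat $p(x)$ and $s(x)$ as constants and the same calculations produce the same bounds. What remains is to verify that the resulting $k$ and $\eta$ can be chosen uniformly in $x$, depending only on the parameters $\lambda,\Lambda,M,p_0,p_1,s_0,s_1,\gamma,\tau$ and $\delta$.

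For Case 1 ($p(x)>2$), I would repeat estimates \eqref{eq:case1est1a}--\eqref{eq:case1est3} with $p(x),s(x)$ in place of $p,s$. The factor $k^{p(x)-1}$ (for $k<1$) and the polynomial factors in $p(x)$, such as $(p(x)-1)$ and $2^{p(x)-3}$, are bounded uniformly in terms of $p_0$ and $p_1$ and tend to $0$ as $k\to 0$. The only delicate constant is $(1/4)^{p(x)(1-s(x))}/(p(x)(1-s(x)))$ arising in the analogue of \eqref{eq:case1est1}; here $p(x)>2$ and $s(x)<s_1<1$ together give the uniform lower bound $p(x)(1-s(x))>2(1-s_1)>0$, keeping this factor bounded above. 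The $\eta$-term is treated by choosing $\eta<\gamma/(p_1-1)$ and invoking dominated convergence with a uniform majorant to make it tend to $0$ as $\eta\to 0$.

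For Case 2 ($p(x)<2$), the only new ingredient is a uniform version of \eqref{eq:case2est1}. The integral $\int_{B_1}|y|^{p(x)-1-n-s(x)p(x)}\,dy$ is finite exactly when $p(x)(1-s(x))-1>0$, and the extra hypothesis $p(x)(1-s(x))-1>\tau$ bounds it uniformly by a constant depending only on $\tau$, playing the role of $1/(p(1-s)-1)$ in the constant-exponent proof. The tail estimate \eqref{eq:case2est2} is controlled with the same exponent-independent reasoning as in Case 1.

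The right-hand side satisfies $2^{1-p(x)}\inf_{A}\int_A K(x,y)\,dy\geq 2^{1-p_1}\lambda\delta/2^{n+s_1p_1}$, which is strictly positive and uniform in $x$. Hence choosing $k$ and $\eta$ sufficiently small, depending only on the listed parameters, yields the strict inequalities \eqref{eq:varkassp2} and \eqref{eq:varkassp1} simultaneously for every $x\in B_{3/4}$. The principal obstacle is precisely this uniformity: the integrals $\int_{B_{1/4}}|y|^{p-n-sp}\,dy$ and $\int_{B_1}|y|^{p-1-n-sp}\,dy$ can degenerate as $p(1-s)\to 0$ or $\to 1$, respectively, and these degeneracies are excluded exactly by the assumptions $p(x)>2$, $s(x)<s_1$ in Case 1 and $p(x)(1-s(x))-1>\tau$ in Case 2.
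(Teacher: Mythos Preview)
Your proposal is correct and follows essentially the same approach as the paper's own proof: both rerun the pointwise estimates \eqref{eq:case1est1a}--\eqref{eq:case1est3} and \eqref{eq:case2est1}--\eqref{eq:case2est2} with $p(x),s(x)$ in place of $p,s$, then check that the resulting constants are uniform in $x$ using the bounds $p_0<p(x)<p_1$, $s_0<s(x)<s_1$, and $p(x)(1-s(x))-1>\tau$, and finally bound the right-hand side below by $2^{1-p_1}\lambda\delta/2^{n+s_1p_1}$. If anything, you are slightly more explicit than the paper about where uniformity could fail (the lower bound $p(x)(1-s(x))>2(1-s_1)$ in Case~1 and the choice $\eta<\gamma/(p_1-1)$), which the paper subsumes under ``due to the assumptions on $p$ and $s$, the terms are all uniformly bounded.''
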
 
\begin{proof}

We point out the differences to the proof of Proposition \ref{prop:varp} and briefly explain how they can be dealt with.

\noindent{\bf Case 1: $p(x)\geq 2$}\\
By the exact same computation as in \eqref{eq:case1est1a}, \eqref{eq:case1est1}, \eqref{eq:case1est2} and \eqref{eq:case1est3} in the proof of Proposition \ref{prop:fixp} (since the computation is made for a fixed $x$), we can conclude that the left hand side is bounded by
\begin{align*}
 |2kC|^{p-1} M\int_{\R^n\setminus B_\frac14} \frac{dy}{|y|^{n+\gamma}}+\frac{C^{p(x)-1}2^{p(x)-3}k^{p(x)-1}(p(x)-1)\Lambda \left(\frac14\right)^{p(x)(1-s(x))}}{p(x)(1-s(x))}
 \end{align*}
 plus terms involving the quantities
 \begin{align*}
M\int_{\R^n\setminus B_\frac14}\left(|8y|^\eta-1\right)^{p(x)-1} \frac{dy }{|y|^{n+\gamma}}
\end{align*}
and
\begin{align*}
k^{p(x)-1}C^{p(x)-1}2^{p(x)-1}M\int_{\R^n\setminus B_\frac14} \frac{dy}{|y|^{n+\gamma}}.  
\end{align*}
Due to the assumptions on $p$ and $s$, the terms are all uniformly bounded. Thus, if we choose $\eta$ and $k$ small enough (depending on $\Lambda$, $M$, $p_0$, $p_1$, $s_0$, $s_1$ and $\gamma$) we can make all the terms in the left hand side as small as desired.

For the right hand side, we again have
$$
2^{1-p(x)}\inf_{A\subset B_2,|A|>\delta}\int_{A}K(x,y)\,dy \geq \frac{2^{1-p(x)}\lambda \delta}{2^{n+s(x)p(x)}}\geq \frac{2^{1-p_1}\lambda \delta}{2^{n+s_1p_1}}.
$$
Then it is clear that we can choose $\eta$ and $k$, depending only on $\lambda$, $\Lambda$, $M$, $p_0$, $p_1$, $s_0$, $s_1$, $\gamma$ and $\delta$, so that the left hand side is larger than the right hand side.

\noindent{\bf Case 2: $1/(1-s(x))<p(x)<2$}\\
We can again estimate the left hand side by
\begin{align*}
&\Lambda C^{p(x)-1}(3^{p(x)-1}+2^{p(x)-1})k^{p(x)-1}\frac{1}{p(x)(1-s(x))-1}\\&+C^{p(x)-1}M(3^{p(x)-1}+2^{p(x)-1})k^{p(x)-1}\gamma^{-1},
\end{align*}
as in \eqref{eq:case2est1} and \eqref{eq:case2est2}. By choosing $k$ small (depending on $\Lambda$, $M$, $p_0$, $p_1$, $\tau$, $\gamma$) we can make both these terms as small as desired. The result follows also in this case.
\end{proof}

\begin{lem}\label{lem:key2} Assume the hypotheses of Proposition \ref{prop:varp}. Suppose 
\begin{align*}
Lu\leq \e\text{ in }B_1,\\
u\leq 1\text{ in }B_1,\\
u(x)\leq 2|2x|^\eta-1\text{ in }\R^n\setminus B_1,\\
|B_1\cap \{u\leq 0\}|>\delta,
\end{align*}
where $\eta$ is as in Proposition \ref{prop:varp} and
\begin{align*}
\e=\min(2,2^{p(x)-1})\int_{y\not\in B_\frac14}(|8y|^\eta-1)^{p(x)-1} K(x,y)\, dy.
\end{align*}
Then $u\leq 1-\theta$ in $B_{1/2}$, where $\theta = \theta(\lambda,\Lambda,M,p_0,p_1,s_0,s_1,\gamma,\tau,\delta)>0$.
\end{lem}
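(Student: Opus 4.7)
The strategy is to follow the proof of Lemma \ref{lem:key} line for line, with Proposition \ref{prop:varp} replacing Proposition \ref{prop:fixp}, while carefully tracking the effect of the extra right-hand side $\e$. Fix $\delta > 0$ and let $k, \eta$ be as in Proposition \ref{prop:varp}. Setting $\theta := k(\beta(1/2) - \beta(3/4))$, I argue by contradiction: if $u(x_0) > 1 - \theta$ at some $x_0 \in B_{1/2}$, then since $\beta$ is non-increasing along rays from the origin and vanishes outside $B_1$, the supremum of $u + k\beta$ on $B_1$ exceeds $1$ and must be attained at some $x \in B_{3/4}$. The $C^2$ function $\phi = -k\beta + (u + k\beta)(x)$ touches $u$ from above at $x$, so Proposition \ref{prop:pw} (which applies equally under the hypotheses of Theorem \ref{thm:main2}) yields $Lu(x) \leq \e$ pointwise.

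Next, I would estimate $L(u + k\beta)(x)$ from below and from above exactly as in Lemma \ref{lem:key}. From below, the maximum property makes the integrand non-negative for $x + y \in B_1$; restricting to the set where $u \leq 0$ and using $u(x) + k\beta(x) > 1$ together with $k\beta \leq 1/2$ yields a contribution of at least $2^{1-p(x)} \inf_{A \subset B_2, |A| > \delta} \int_A K(x,y)\,dy$, while the piece from $\{x + y \notin B_1\}$ is at least $-2^{p(x)-1} \int_{y \notin B_{1/4}} (|8y|^\eta - 1)^{p(x)-1} K(x,y)\,dy$, using the growth condition on $u$ outside $B_1$, the inclusion $\{x + y \notin B_1\} \subset \R^n \setminus B_{1/4}$ (since $x \in B_{3/4}$), and $|2(x+y)|^\eta \leq |8y|^\eta$ for $|y| \geq 1/4$. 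From above, I split into the cases $p(x) \geq 2$ (applying Lemma \ref{lem:pest} after adding the positive quantity $2(|2(x+y)|^\eta - 1)$ on $\{x+y \notin B_1\}$ to make the argument non-negative) and $1/(1-s(x)) < p(x) < 2$ (applying Lemma \ref{lem:pineq2}). The one change compared to Lemma \ref{lem:key} is that $Lu(x)$ now contributes $2^{p(x)-2} \e$ and $\e$ respectively, rather than $0$.

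The role of the factor $\min(2, 2^{p(x)-1})$ in the definition of $\e$ is to make the $\e$-contribution to the upper bound combine with the lower-bound tail to produce exactly the coefficient $2^{p(x)}$ appearing in front of $\int_{y \notin B_{1/4}} (|8y|^\eta - 1)^{p(x)-1} K(x,y)\,dy$ in both \eqref{eq:varkassp2} and \eqref{eq:varkassp1}: for $p(x) \geq 2$, $2^{p(x)-2} \cdot 2 + 2^{p(x)-1} = 2^{p(x)}$, and for $p(x) < 2$, $2^{p(x)-1} + 2^{p(x)-1} = 2^{p(x)}$. Substituting the upper bound into the lower bound therefore gives exactly the reversal of the strict inequality in \eqref{eq:varkassp2} or \eqref{eq:varkassp1}, which is the desired contradiction. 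The main point to verify is this bookkeeping of constants; as already noted in the remark after Proposition \ref{prop:fixp}, the fact that $p$ and $s$ are allowed to depend on $x$ presents no real difficulty here, since every estimate is made at the single fixed point $x$ and no regularity of $p$ or $s$ is used.
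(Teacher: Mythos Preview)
Your proposal is correct and follows essentially the same route as the paper: argue by contradiction via a touching point $x\in B_{3/4}$ for $u+k\beta$, invoke Proposition \ref{prop:pw} to get $Lu(x)\leq \e$ pointwise, and then reproduce the lower and upper estimates of Lemma \ref{lem:key} at that fixed $x$, with the only change being the extra contribution $2^{p(x)-2}\e$ (Case 1) or $\e$ (Case 2) in the upper bound. Your bookkeeping of how the coefficient $\min(2,2^{p(x)-1})$ in $\e$ makes the tail integral appear with total coefficient $2^{p(x)}$---matching \eqref{eq:varkassp2} and \eqref{eq:varkassp1}---is exactly the point, and in fact you spell it out more explicitly than the paper does.
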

\begin{proof}
The first part of the proof is exactly the same as the one of Proposition \ref{prop:fixp}. Then it comes to estimating $L(u+k\beta)\, (x)$ from below. Since $x$ is a fixed point throughout all the calculations, we obtain as in \eqref{eq:Lfrombelow}
\begin{align}\label{eq:Lfrombelowvar}
&L(u+k\beta)\,(x)\geq \\
\nonumber & \frac{1}{2^{p(x)-1}}\inf_{A_0\subset B_2,|A_0|>\delta}\int_{A_0}K(x,y)\,d y-2^{p(x)-1}\int_{y\not\in B_\frac14}(|8y|^\eta-1)^{p(x)-1} K(x,y)\, dy.
\end{align}	
The next step is then to estimate  $L(u+k\beta)\, (x)$ from above. We obtain almost the same estimate as in \eqref{eq:Lfromabove1} and \eqref{eq:Lfromabove2}. The difference is that we instead of $Lu\,(x)\leq 0$ use $Lu\,(x)\leq \e$ and obtain an extra term
$$
\min(2,2^{p(x)-1})\int_{y\not\in B_\frac14}(|8y|^\eta-1)^{p(x)-1} K(x,y)\, dy.
$$
Hence the estimate reads in the two different cases:\\
\noindent{\bf Case 1: $p(x)\geq 2$}
\begin{align}\nonumber 
&L(u+k\beta)\,(x) \\&\label{eq:Lfromabove1var}  \leq 2^{p(x)-2}k^{p(x)-1}\pv\int_{x+y\in B_1} |\beta(x)-\beta(x+y)|^{p(x)-2}(\beta(x)-\beta(x+y)) K(x,y)\,dy\\
&+2^{p(x)-2}\int_{x+y\not\in B_1}|k\beta(x)-k\beta(x+y)+2(|2(x+y)|^\eta-1)|^{p(x)-1}K(x,y)\, dy  \nonumber 
\\
&+2^{p(x)-1}\int_{y\not\in B_\frac14}(|8y|^\eta-1)^{p(x)-1} K(x,y)\, dy .\nonumber 
\end{align}
 \noindent{\bf Case 2: $1/(1-s(x))<p(x)<2$}
 \begin{align}
L(u+k\beta)\,(x)&\leq k^{p(x)-1}(3^{p(x)-1}+2^{p-1})\int_{\R^n}|\beta(x)-\beta(x+y)|^{p(x)-1} K(x,y)\,d y \label{eq:Lfromabove2var}\\
&\nonumber +2^{p(x)-1}\int_{y\not\in B_\frac14}(|8y|^\eta-1)^{p(x)-1} K(x,y)\, dy.
\end{align}
The combination of \eqref{eq:Lfrombelowvar} with either \eqref{eq:Lfromabove1var} or \eqref{eq:Lfromabove2var} is a contradiction to \eqref{eq:varkassp2} or \eqref{eq:varkassp1}.
\end{proof}
\begin{proof}[Proof of Theorem \ref{thm:main2}]
The proof is very similar to the proof of Theorem \ref{thm:main}. We first rescale $u$ by the factor
$$
\left(2\|u\|_{L^\infty(\R^n)}+2^{\frac{p_1-1}{p_0-1}}\max\Big\{\left(\frac{\|f\|_{L^\infty(B_2)}}{\e}\right)^\frac{1}{p_0-1},\left(\frac{\|f\|_{L^\infty(B_2)}}{\e}\right)^\frac{1}{p_1-1}\Big\}\right)^{-1},
$$
where $\e$ is chosen as in Lemma \ref{lem:key2} with $\delta=|B_1|/2$. Then one readily verifies that
$$
L u =\tilde f\text{ in }B_2,\quad \|\tilde f\|_{L^\infty(B_2)}\leq \frac{\e}{2^{p_1-1}},\quad \osc_{\R^n}u \leq 1.
$$
Next we proceed as before: we find $a_j$ and $b_j$ such that
\begin{equation}\label{eq:akbkvar}
b_j\leq u\leq a_j\text{ in  }B_{2^{-j}(x_0)},\quad |a_j-b_j|\leq 2^{-j\alpha},
\end{equation}
where we require from $\alpha$ that
$$
\frac{2-\theta}{2}\leq 2^{-\alpha},  \alpha\leq \eta \text{ and } \alpha\leq \frac{s_0p_0}{p_1-1},
$$
where $\beta$ is from Lemma \ref{lem:key2} and $\eta$ from Proposition \ref{prop:varp}, with $\delta =|B_1|/2$. As before, \eqref{eq:akbkvar} is satisfied for $j\leq 0$ with the choice $b_j=\inf_{\R^n} u$ and $a_j=b_j+1$. Now, given that \eqref{eq:akbkvar} holds for $j\leq k$ we construct $a_{k+1}$ and $b_{k+1}$. Define
$$
v(x)=2^{\alpha k+1}(u(2^{-k}x+x_0)-m),\quad  \text{ with } m=\frac{a_k+b_k}{2}.
$$
Then
\begin{align*}
&\pv \int_{\R^n}|v(x)-v(x+y)|^{p(x)-2}(v(x)-v(x+y))K_{x_0,2^{-k}}(x,y)\, dy \\
&=2^{(\alpha k+1)(p(2^{-k}x+x_0)-1)-k(s(2^{-k}x+x_0)p(2^{-k}x+x_0))}\tilde f\text{ in }B_1,
\end{align*}
and
$$
 |v|\leq 1 \text{ in }B_1.
$$
As before,
$$
K_{x_0,2^{-k}}(x,y)=2^{-k(n+s(2^{-k}x+x_0)p(2^{-k}x+x_0))}K(2^{-k}x+x_0,2^{-k}y) 
$$
satisfies the same assumptions as $K$. From our choice of $\alpha$ it also follows that
$$
\Big|2^{(\alpha k+1)(p(2^{-k}x+x_0)-1)-k(s(2^{-k}x+x_0)p(2^{-k}x+x_0))}\tilde f\Big|\leq \e \text{ in $B_1$}.
$$
Supposing that $|\{v\leq 0\}\cap B_1|\geq |B_1|/2$ and observing that as before
$$
v(y)\leq 2|2y|^\eta-1, \quad \text{ for $|y|>1$},
$$
we see that $v$ satisfies all the assumptions of Lemma \ref{lem:key2}. The choice $\delta = |B_1|/2$ yields
$$
v(x)\leq 1-\theta \text{ in }B_\frac12,$$
which again implies
$$
u(x)\leq b_k+2^{-\alpha(k+1)}.
$$
Thus the choice $b_{k+1}=b_k$ and $a_{k+1}=b_k+2^{-\alpha(k+1)}$ settles \eqref{eq:akbkvar} for the step $j=k+1$. Hence, we arrive at the estimate
$$
 \osc_{B_r(x_0)} u\leq 2^\alpha r^\alpha .
$$
Recalling our rescaling factor in the beginning and rescaling back to our original $u$ yields
\begin{align*}
&\osc_{B_r(x_0)} u\\
&\leq 2^\alpha\left(2\|u\|_{L^\infty(\R^n)}+2^{\frac{p_1-1}{p_0-1}}\max\Big\{\left(\frac{\|f\|_{L^\infty(B_2)}}{\e}\right)^\frac{1}{p_0-1},\left(\frac{\|f\|_{L^\infty(B_2)}}{\e}\right)^\frac{1}{p_1-1}\Big\}\right)^{-1} r^\alpha \\
&\leq  C\left(\|u\|_{L^\infty(\R^n)}+\max\left(\|f\|_{L^\infty(B_2)}^\frac{1}{p_0-1},\|f\|_{L^\infty(B_2)}^\frac{1}{p_1-1}\right)\right) r^\alpha,
\end{align*}
which is the desired result.

\end{proof}

\bibliography{ref.bib} 
\end{document}